\tikzstyle{vertex}=[auto=left,circle,fill=black!0,minimum size=0pt,inner sep=0pt]
\newtheorem{theorem}{Theorem}[section]
\newtheorem{lemma}[theorem]{Lemma}
\newtheorem{corollary}[theorem]{Corollary}
\newtheorem{proposition}[theorem]{Proposition}
\theoremstyle{definition}  
\newtheorem{definition} [theorem] {Definition}
\newtheorem{remark} [theorem] {Remark}
\newcommand{\Diam}{\mathrm{diam}}
\newcommand{\Link}{N_G}
\newcommand{\Linkp}{N_G^>}
\numberwithin{equation}{section}
\newcommand{\fignum}[1]{#1}
\begin{document}

\title{On Closed Graphs I}

\author{David A.\ Cox}
\address{Department of Mathematics and Statistics, Amherst
College, Amherst, MA 01002-5000, USA}
\email{dacox@amherst.edu}

\author{Andrew Erskine}
\address{Department of Mathematics and Statistics, Amherst
College, Amherst, MA 01002-5000, USA}
\email{aperskine@gmail.com}

\begin{abstract}
A graph is closed when its vertices have a labeling by $[n]$ with a
certain property first discovered in the study of binomial edge
ideals.  In this article, we prove that a connected graph has a closed
labeling if and only if it is chordal, claw-free, and has a property we
call \emph{narrow}, which holds when every vertex is distance at most
one from all longest shortest paths of the graph.  
 \end{abstract}

\keywords{closed graph, chordal graph, binomial edge ideal} 
\subjclass[2010]{05C75 (primary), 05C25, 05C78 13P10 (secondary)}

\maketitle

\section{Introduction}
\label{intro}

In this paper, $G$ will be a simple graph with vertex
set $V(G)$ and edge set $E(G)$. 

\begin{definition}
\label{closeddef}
A \emph{labeling} of $G$ is a bijection $V(G) \simeq [n] =
\{1,\dots,n\}$, and given a labeling, we typically assume $V(G) =
   [n]$.  A labeling is \emph{closed} if whenever we have distinct edges
   $\{j,i\}, \{i,k\} \in E(G)$ with either $j > i < k$ or $j < i > k$,
   then $\{j,k\} \in E(G)$.  Finally, a graph is \emph{closed} if it
   has a closed labeling.
\end{definition}

A labeling of $G$ gives a direction to each edge $\{i,j\} \in E(G)$
where the arrow points from $i$ to $j$ when $i < j$, i.e., the arrow
points to the bigger label.  The following picture illustrates
what it means for a labeling to be closed:
\begin{equation}
\label{closedpicture}
\begin{array}{ccc}
\begin{tikzpicture}
  \node[vertex] (n1)  at (2,1) {$i\rule[-2.5pt]{0pt}{10pt}$};
  \node[vertex] (n2)  at (1,3)  {$\rule[-2.5pt]{0pt}{10pt}j$};
  \node[vertex] (n3) at (3,3) {$k\rule[-2.5pt]{0pt}{10pt}$};
 
  \foreach \from/\to in {n1/n2,n1/n3}
\draw[->] (\from)--(\to);;

 \foreach \from/\to in {n2/n3}
\draw[dotted] (\from)--(\to);;

\end{tikzpicture}&\hspace{30pt}&
\begin{tikzpicture}
  \node[vertex] (n1)  at (2,1) {$i\rule[-2.5pt]{0pt}{10pt}$};
  \node[vertex] (n2)  at (1,3)  {$j\rule[-2.5pt]{0pt}{10pt}$};
  \node[vertex] (n3) at (3,3) {$k\rule[-2.5pt]{0pt}{10pt}$};
 
  \foreach \from/\to in {n2/n1,n3/n1}
\draw[->] (\from)--(\to);;

 \foreach \from/\to in {n2/n3}
\draw[dotted] (\from)--(\to);;

\end{tikzpicture}
\end{array}
\end{equation}
Whenever the arrows point away from $i$ (as on the left) or
towards $i$ (as on the right), closed means that $j$ and $k$
are connected by an edge.

Closed graphs were first encountered in the study of binomial edge
ideals.  The \emph{binomial edge ideal} of a labeled graph $G$ is the
ideal $J_G$ in the polynomial ring
$\mathsf{k}[x_1,\dots,x_n,y_1,\dots,y_n]$ ($\mathsf{k}$ a field)
generated by the binomials
\[
f_{ij} = x_iy_j - x_jy_i
\]
for all $i,j$ such that $\{i,j\} \in E(G)$ and $i < j$.  A key result,
discovered independently in \cite{H} and \cite{O}, is that the above
binomials form a Gr\"obner basis of $J_G$ for lex order with $x_1 >
\cdots > x_n > y_1 > \cdots > y_n$ if and only if the labeling is
closed.  The name ``closed'' was introduced in \cite{H}.

Binomial edge ideals are explored in \cite{E} and \cite{S}, and a
generalization is studied in \cite{R}.  The paper \cite{CR}
characterizes closed graphs using the clique complex of $G$, and
closed graphs also appear in \cite{E3,E4,E2}.

The goal of this paper is to characterize when a graph $G$ has a
closed labeling in terms of properties that can be seen directly from
the graph.  Our starting point is the following result proved in
\cite{H}. 

\begin{proposition} 
\label{Hprop}
Every closed graph is chordal and claw-free.
\end{proposition}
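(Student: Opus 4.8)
The plan is to verify the two properties separately, in each case exploiting the defining ``local extremum'' feature of a closed labeling: whenever a vertex $i$ is larger than two of its neighbors, or smaller than two of its neighbors, those two neighbors must themselves be joined by an edge. Throughout I fix a closed labeling, so $V(G) = [n]$ and the closedness condition of Definition~\ref{closeddef} is available, and I read the cycle indices below cyclically.

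For claw-freeness I would argue by contradiction. Suppose $G$ contains an induced claw with center $i$ and leaves $a,b,c$, so that $\{i,a\},\{i,b\},\{i,c\}\in E(G)$ while $a,b,c$ are pairwise non-adjacent. Each of the labels $a,b,c$ is either greater than or less than $i$, so by the pigeonhole principle two of them, say $a$ and $b$, lie on the same side of $i$. Then either $a>i<b$ or $a<i>b$, and in both cases the edges $\{a,i\},\{i,b\}$ match the hypothesis of closedness, forcing $\{a,b\}\in E(G)$. This contradicts the assumption that the leaves are pairwise non-adjacent, so no induced claw can exist.

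For chordality I would take an arbitrary cycle $v_1,v_2,\dots,v_m,v_1$ with $m\ge 4$ and exhibit a chord. Let $v_k$ be the vertex on the cycle carrying the largest label. Its two cycle-neighbors $v_{k-1}$ and $v_{k+1}$ then both have smaller labels, so $v_{k-1}<v_k>v_{k+1}$, which is exactly the local-maximum configuration of a closed labeling; hence $\{v_{k-1},v_{k+1}\}\in E(G)$. Because $m\ge 4$, the vertices $v_{k-1}$ and $v_{k+1}$ are not consecutive on the cycle, so this edge is a genuine chord rather than a cycle edge. As the cycle was arbitrary, every cycle of length at least four has a chord, and $G$ is chordal.

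Neither step involves a serious obstacle; the only point demanding a little care is the last one, namely checking that the edge produced by closedness is not already an edge of the cycle. This is precisely where the hypothesis $m\ge 4$ is needed: for a triangle the two neighbors of the maximal vertex are adjacent and closedness yields nothing new, which is consistent with triangles being allowed in chordal graphs. One could equally run the chordality argument using the minimal-label vertex together with the local-minimum configuration.
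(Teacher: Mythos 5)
Your proof is correct. Note that the paper does not actually prove Proposition~\ref{Hprop}; it quotes the result from \cite{H}, so there is no internal argument to compare against. Your two steps are the standard elementary derivation from Definition~\ref{closeddef}: pigeonhole on which side of the center $i$ the three leaves lie for claw-freeness, and the maximal-label vertex of a cycle giving the local-maximum configuration $v_{k-1}<v_k>v_{k+1}$ for chordality. The one point requiring care---that for $m\ge 4$ the vertices $v_{k-1}$ and $v_{k+1}$ are non-consecutive on the cycle, so the edge produced by closedness is a genuine chord---is handled correctly, and the edges $\{v_{k-1},v_k\}$, $\{v_k,v_{k+1}\}$ are indeed distinct as required by the definition since $v_{k-1}\neq v_{k+1}$.
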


``Claw-free'' means that $G$ has no induced subgraph of the form
\begin{equation}
\label{ex1}
\begin{array}{c}
\begin{tikzpicture}
  \node[vertex] (k) at (3,6) {$\bullet$};
  \node[vertex] (j)  at (2.1,3.9){$\bullet$};
  \node[vertex] (l) at (3.9,3.9) {$\bullet$}; 
  \node[vertex] (i) at (3,5){$\bullet$};

  \foreach \from/\to in {i/l,i/k,i/j}
    \draw (\from) -- (\to);

\end{tikzpicture}
\end{array}
\end{equation}

Besides being chordal and claw-free, closed graphs also have a
property called \emph{narrow}.  The \emph{distance} $d(v,w)$ between
vertices $v,w$ of a connected graph $G$ is the length of the shortest
path connecting them, and the \emph{diameter} of $G$ is
$\mathrm{diam}(G) = \max\{d(v,w) \mid v,w \in E(G)\}$.  Given vertices
$v,w$ of $G$ satisfying $d(v,w) = \mathrm{diam}(G)$, a shortest path
connecting $v$ and $w$ is called a \emph{longest shortest path} of
$G$.

\begin{definition} 
\label{narrowdef}
A connected graph $G$ is \emph{narrow} if for every $v \in V(G)$ and
every longest shortest path $P$ of $G$, either $v \in V(P)$ or
$\{v,w\} \in E(G)$ for some $w \in V(P)$. 
\end{definition}

Thus a connected graph is narrow if every vertex is distance at most
one from every longest shortest path.  Here is a graph that is chordal
and claw-free but not narrow:
\begin{equation}
\label{ex2}
\begin{array}{c}
\begin{tikzpicture}
  \node[vertex] (n1)  at (3,1) {$A\rule[-2pt]{0pt}{10pt}$};
  \node[vertex] (n2)  at (2,3)  {$B\rule[-2pt]{0pt}{10pt}$};
  \node[vertex] (n3) at (4,3) {$C\rule[-2pt]{0pt}{10pt}$};
 
  \node[vertex] (n4) at (3,5){$E\rule[-2pt]{0pt}{10pt}$};
  \node[vertex] (n5) at (5,5){$F\rule[-2pt]{0pt}{10pt}$};
  \node[vertex] (n6) at (1,5){$D\rule[-2pt]{0pt}{10pt}$};

  \foreach \from/\to in {n1/n2,n1/n3,n2/n3,n2/n4, n3/n4, n3/n5,
    n4/n5,n2/n6,n4/n6} 
\draw (\from)--(\to);;

\end{tikzpicture}
\end{array}
\end{equation}
Narrowness fails because $D$ is distance two from the
longest shortest path $ACF$.

We can now state the main result of this paper.

\begin{theorem} 
\label{mainthm}
A connected graph is closed if and only if it is chordal, claw-free,
and narrow.
\end{theorem}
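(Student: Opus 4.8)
The proof is a biconditional, so I would handle the two implications separately, and I expect them to be of very unequal difficulty.

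\emph{Closed $\Rightarrow$ chordal, claw-free, narrow.} The first two properties are precisely Proposition~\ref{Hprop}, so the only new content here is narrowness. Fix a closed labeling $V(G)=[n]$. The engine of this direction is the observation that closedness forces each up-neighborhood $\Linkp(i)=\{j>i:\{i,j\}\in E(G)\}$ and each down-neighborhood $\Linkm(i)$ to be a clique: two larger neighbors $j,k$ of $i$ realize the left-hand configuration of~\eqref{closedpicture}, so $\{j,k\}\in E(G)$, and symmetrically for two smaller neighbors. From this I would prove that between any two vertices there is a \emph{label-monotone} geodesic, obtained by repeatedly replacing a local ``peak'' $j>i<k$ inside a shortest path by the edge $\{j,k\}$ (which exists, and cannot lengthen the path). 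In particular I would take a longest shortest path $P=(p_0<p_1<\cdots<p_d)$ that is increasing, and check that its endpoints are forced to be the global extremes $p_0=1$, $p_d=n$ (a vertex lying strictly between the clique-levels of the two endpoints would otherwise yield a longer geodesic). Given an arbitrary $v$, the clique structure of $\Linkp$ and $\Linkm$ together with claw-freeness then pins $v$ adjacent to some $p_t$ at its own ``level.'' The distance bookkeeping in this last step is the only routine-but-fiddly part of the forward direction.

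\emph{Chordal, claw-free, narrow $\Rightarrow$ closed.} This is the substantial direction, and here I would exploit chordality by passing to a clique tree $T$ of $G$ (maximal cliques as nodes, with the running-intersection property). The plan is to show that $T$ may be chosen to be a \emph{path} $F_1,F_2,\dots,F_r$ of maximal cliques with successive cliques overlapping, and then to read off a closed labeling by numbering the vertices from left to right so that each $F_t$ becomes an interval of consecutive integers (new vertices of $F_t$ receiving the next available labels, ordered within each clique so that shared separator vertices come first). With such a labeling the closed condition is verified directly: if $\{j,i\},\{i,k\}\in E(G)$ with $j,i,k$ forced into the two configurations of~\eqref{closedpicture}, then $j,k$ both lie in cliques on the same side of $i$ along the path, and claw-freeness forces them into a common clique, giving $\{j,k\}\in E(G)$.

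The main obstacle is the structural claim that \emph{narrowness forbids branching in the clique tree}. I would prove the contrapositive: if some node of $T$ has degree at least three, then $G$ contains a central clique with three ``arms'' heading off in independent directions (exactly the shape of~\eqref{ex2}, where the clique $BCE$ meets $ABC$, $CEF$, and $BED$), and claw-freeness keeps each arm essentially path-like. Choosing a longest shortest path that runs through the two longest arms then exhibits a vertex on the third arm at distance at least two from it, contradicting narrowness; this is the step where the metric hypothesis ``narrow'' does its real work. A secondary subtlety, which I would flag, is that narrowness is not hereditary, so an inductive formulation (delete a simplicial vertex in an end-clique $F_1$ or $F_r$, relabel by $G\mapsto G\setminus\{v\}$, and reattach $v$ as the label $1$ or $n$) must be set up carefully to preserve connectivity, claw-freeness, \emph{and} narrowness simultaneously; establishing the clique-path structure first is what makes that control possible.
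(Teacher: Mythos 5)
Your forward direction follows the same general line as the paper (completeness of $\Linkp(i)$ and $\Linkm(i)$, directed geodesics), but it contains a false intermediate claim and a quantifier gap. The endpoints of an increasing longest shortest path are \emph{not} forced to be $1$ and $n$: in the closed graph on $[5]$ with $\Linkp(1)=\{2,3\}$, $\Linkp(2)=\{3,4\}$, $\Linkp(3)=\{4,5\}$, $\Linkp(4)=\{5\}$, the path $2\,3\,5$ is an increasing longest shortest path starting at $2$, and $1\,2\,4$ is one ending at $4$. Moreover, narrowness quantifies over \emph{every} longest shortest path, so verifying closeness to one well-chosen path is not enough. The paper's fix is to work with the layers $L_N=\{i:d(i,1)=N\}$: it shows each $L_N$ is a clique (Proposition~\ref{layerprop}), that every longest shortest path has one endpoint in $L_0\cup L_1$ and the other in $L_h$ (Proposition~\ref{diameters}), hence meets every layer, and therefore every vertex is adjacent to the path's representative in its own layer. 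You would need some substitute for this layer argument to cover all longest shortest paths at once.

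For the converse you take a genuinely different route from the paper. The paper runs a BFS-like labeling algorithm (start at a minimal-degree endpoint of a longest shortest path, label neighborhoods in order of fewest unlabeled neighbors) and proves by induction on $m$ that each $\Linkp(m)$ is a complete interval, using chordality only for $4$-, $5$-, and $6$-cycles together with claw-freeness and narrowness in a case analysis. Your plan instead goes through the clique tree: show narrowness forbids branching, obtain a clique path, and label left to right. This is essentially the route ``chordal $+$ claw-free $+$ narrow $\Rightarrow$ proper interval $\Rightarrow$ closed,'' and it is a legitimate alternative (the clique-complex characterization of closed graphs in \cite{CR} supports the last step). The genuine gap is the structural lemma that a branch node of the clique tree violates narrowness. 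Your sketch assumes the diameter is realized between the tips of the two longest arms and that the third arm supplies a vertex at distance at least two from that path; neither is automatic. The path through the branch clique may use only one vertex of it, and whether the third-arm vertex is within distance one of the path depends on how the three separators sit inside the branch clique (in \eqref{ex2} the separators $\{B,C\},\{C,E\},\{B,E\}$ are pairwise distinct, which is what makes $D$ far from $ACF$; claw-freeness is needed to rule out the degenerate configurations where separators coincide). Until that separator analysis and the diameter claim are carried out, the key lemma is asserted rather than proved, and the inductive deletion scheme you flag at the end inherits the same difficulty.
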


This theorem is cited in \cite{E3,E4,E2}.  Since a graph is closed if
and only if its connected components are closed \cite{CR}, we get the
following corollary of Theorem~\ref{mainthm}.

\begin{corollary}
\label{cormainthm}
A graph is closed if and only if it is chordal, claw-free, and its
connected components are narrow. 
\end{corollary}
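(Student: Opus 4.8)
The plan is to deduce Corollary~\ref{cormainthm} from Theorem~\ref{mainthm} componentwise, and---since all the content lies in the theorem---to indicate how I would prove the theorem as well. For the corollary, write $G$ as the disjoint union of its connected components $G_1,\dots,G_c$. By \cite{CR}, $G$ is closed if and only if each $G_i$ is closed, and Theorem~\ref{mainthm} says each connected $G_i$ is closed if and only if it is chordal, claw-free, and narrow. To restate this for $G$ itself I would use that chordality and claw-freeness are componentwise: a chordless cycle of length $\ge 4$ and a claw are both connected, so each is induced in $G$ precisely when it is induced in a single component. Thus ``each $G_i$ is chordal and claw-free'' is equivalent to ``$G$ is chordal and claw-free,'' while narrowness must remain attached to the components, since Definition~\ref{narrowdef} is only defined for connected graphs. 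Assembling these equivalences gives the stated result, the only subtlety being this elementary remark about connected forbidden subgraphs.

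The substance is Theorem~\ref{mainthm}. For the forward direction I need only narrowness, as chordality and claw-freeness are Proposition~\ref{Hprop}. Given a connected closed graph with closed labeling $V(G)=[n]$, I would first establish (a) that consecutive vertices are adjacent, $\{i,i+1\}\in E(G)$, and (b) the interval property that $\{i,j\}\in E(G)$ with $i<k<j$ forces $\{i,k\},\{k,j\}\in E(G)$. Fact (a) I would prove by induction on $n$, deleting the top vertex $n$: since $N(n)=N^<(n)$ is a clique, $n$ is not a cut vertex, so $G-n$ remains connected, and a short propagation using that each $N^>(a)$ is a clique gives $\{n-1,n\}\in E(G)$. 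Fact (b) then follows from (a) by induction on $j-i$ using the clique conditions on $N^<(j)$ and $N^>(i)$. Property (b) makes each closed neighborhood $N[v]$ an integer interval $[\ell(v),r(v)]$ with $\ell,r$ nondecreasing, so distances are computed greedily: for $i<j$, $d(i,j)$ is the least $k$ with $r^{(k)}(i)\ge j$. For any longest shortest path $P=(p_0<\cdots<p_d)$ the intervals $N[p_t]$ overlap consecutively, so their union is $[\ell(p_0),r(p_d)]$; comparing the greedy sequences from $1$ and from $p_0$ via monotonicity of $r$ shows $\ell(p_0)=1$, and the order-reversing symmetry $i\mapsto n{+}1{-}i$, which preserves closedness, gives $r(p_d)=n$. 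Hence $\bigcup_t N[p_t]=[1,n]$, which is exactly the assertion that every vertex lies on $P$ or is adjacent to a vertex of $P$, so $G$ is narrow.

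For the converse I would construct a closed labeling of a connected chordal, claw-free, narrow graph $G$; by facts (a) and (b) this is the same as ordering $[n]$ so that every $N[v]$ is an interval. I would induct on $|V(G)|$, stripping a simplicial vertex $v$ (provided by chordality), assigning it an extreme label, and then checking both that $G-v$ is again connected, chordal, claw-free, and narrow and that reinserting $v$ keeps every relevant $N^>(i)$ or $N^<(i)$ a clique. I expect the whole theorem to concentrate its difficulty here, and within it on preserving narrowness: narrowness is global, so deleting $v$ can change the diameter and the family of longest shortest paths. The decisive tool is claw-freeness, which forbids joining a vertex to two non-adjacent vertices and thereby excludes the induced nets and $3$-suns that would otherwise obstruct an interval ordering. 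Analyzing a longest shortest path to pin down where the extreme label must sit, and showing that the outermost vertices form a clique that can be ordered compatibly, is where I anticipate the real work.
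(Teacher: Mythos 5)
Your first paragraph is a correct proof of the corollary and is exactly the paper's argument: the paper derives Corollary~\ref{cormainthm} in one line by combining Theorem~\ref{mainthm} with the fact from \cite{CR} that a graph is closed if and only if each connected component is, the componentwise nature of chordality and claw-freeness being left implicit. For the statement actually in question, nothing more is needed.

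Since you volunteered a sketch of Theorem~\ref{mainthm} itself, a comparison is worth recording. Your forward direction (narrowness of connected closed graphs) is a genuinely different route from the paper's: the paper works with the layers $L_N$ of distances from vertex $1$, shows each layer is a clique (Proposition~\ref{layerprop}) and that longest shortest paths stretch from $L_0\cup L_1$ to $L_h$ (Proposition~\ref{diameters}); you instead make every closed neighborhood an interval $[\ell(v),r(v)]$ with $\ell,r$ nondecreasing and argue that the union of these intervals along a longest shortest path is all of $[1,n]$. That argument does close up (if $r(p_d)<n$ then the greedy distance from $p_0$ to $r(p_d)+1$ exceeds the diameter), and it is arguably cleaner than the layer bookkeeping, though the greedy distance formula and the monotonicity of $\ell,r$ would need to be proved. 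The converse, however, is where the theorem's difficulty sits, and your sketch leaves it genuinely open: you propose induction on $|V(G)|$ by deleting a simplicial vertex, but you do not identify which vertex to delete, nor show that narrowness survives the deletion --- deleting an endpoint of a longest shortest path can change the diameter and hence the entire family of paths against which narrowness is tested. The paper avoids this altogether: it never deletes vertices, but instead runs a BFS-like labeling algorithm (Algorithm~\ref{alg:Labeling}) seeded at a \emph{minimal-degree} endpoint of a longest shortest path and breaking ties by fewest unlabeled neighbors, and then verifies the interval/clique condition of Proposition~\ref{nbdinterval} directly by a case analysis in which both tie-breaking rules are essential (the degree condition in Case~1B of the base case, the unlabeled-neighbor count in Case~3B of the inductive step). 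So: as a proof of the corollary your submission is complete and identical in approach to the paper's; as a proof of the theorem it has a real gap exactly where you predicted one.
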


The independence of the three conditions (chordal, claw-free, narrow)
is easy to see.  The graph \eqref{ex1} is chordal and narrow but not
claw-free, and the graph \eqref{ex2} is chordal and claw-free but not
narrow.  Finally, the $4$-cycle
\[
\begin{tikzpicture}
  \node[vertex] (A)  at (2,1) {$\bullet$};
  \node[vertex] (B)  at (4,1)  {$\bullet$};
  \node[vertex] (C) at (4,3) {$\bullet$};
 
  \node[vertex] (D) at (2,3) {$\bullet$};

  \foreach \from/\to in {A/B,B/C,C/D,D/A}
\draw (\from)--(\to);;

\end{tikzpicture}
\]
is claw-free and narrow but not chordal.

The paper is organized as follows.  In Section~\ref{properties} we
recall some known properties of closed graphs and prove some new ones,
and in Section~\ref{algorithm} we introduce an algorithm for labeling
connected graphs.  Section~\ref{characterize} uses the algorithm  to
prove Theorem~\ref{mainthm}.   

In a subsequent paper \cite{closed2} we will explore further
properties of closed graphs.

\section{Properties of Closed Labelings}
\label{properties}

\subsection{Directed Paths} A path in a graph $G$ is 
$P = v_0v_1\cdots v_{\ell-1}v_\ell$ where $\{v_j,v_{j+1}\} \in E(G)$
for $j = 0,\dots,\ell-1$.  A single vertex is regarded as a path of
length zero.  When $G$ is labeled, we assume as usual that $V(G) =
[n]$.  Then a path $P = i_0i_1\cdots i_{\ell-1}i_\ell$ is
\emph{directed} if either $i_j < i_{j+1}$ for all $j$ or $i_j >
i_{j+1}$ for all $j$.  Here is a result from \cite{H}.

\begin{proposition} 
\label{directed}
A labeling on a graph $G$ is closed if and only if for all vertices
$i,j \in V(G) = [n]$, all shortest paths from $i$ to $j$ are directed.
\end{proposition}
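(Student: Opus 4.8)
The plan is to prove both implications by exploiting the \emph{shortcut} that the closed condition creates. Throughout I will use that the vertices of a shortest path are distinct, so consecutive edges along it are always distinct. The easier direction is that directedness of shortest paths implies the labeling is closed. Suppose $\{j,i\},\{i,k\} \in E(G)$ are distinct edges with $j > i < k$ (the case $j < i > k$ is symmetric). If $\{j,k\} \notin E(G)$, then $j$ and $k$ are joined by the length-two path $jik$ but are not adjacent, so $d(j,k) = 2$ and $jik$ is a shortest path. Since it satisfies $j > i < k$, it is neither increasing nor decreasing, contradicting the hypothesis that all shortest paths are directed. Hence $\{j,k\} \in E(G)$, and the labeling is closed.

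For the converse, I would assume the labeling is closed and let $P = i_0 i_1 \cdots i_\ell$ be a shortest path, arguing by contradiction. If $P$ is not directed, then the sequence of labels $i_0,\dots,i_\ell$ is not monotone, so there is an interior index $m$ with $0 < m < \ell$ at which it has a local extremum, i.e.\ $i_{m-1} > i_m < i_{m+1}$ or $i_{m-1} < i_m > i_{m+1}$. The edges $\{i_{m-1},i_m\}$ and $\{i_m,i_{m+1}\}$ are distinct, so the closed condition applied at $i_m$ forces $\{i_{m-1},i_{m+1}\} \in E(G)$. Replacing the subpath $i_{m-1} i_m i_{m+1}$ by the single edge $i_{m-1} i_{m+1}$ then yields a walk from $i_0$ to $i_\ell$ of length $\ell-1$, hence a path of length at most $\ell-1$, contradicting that $P$ is a shortest path of length $\ell = d(i_0,i_\ell)$.

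The only point requiring care — the closest thing to an obstacle — is the elementary fact that a non-monotone sequence of distinct integers has an interior local extremum. I would justify this by observing that if the sequence is not monotone, then two consecutive differences $i_m - i_{m-1}$ and $i_{m+1} - i_m$ have opposite signs for some $m$, which is exactly a local extremum at $i_m$. Everything else is the direct shortcut argument, so no deeper machinery is needed.
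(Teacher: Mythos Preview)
Your proof is correct and is the standard argument. Note, however, that the paper does not actually give its own proof of this proposition: it is stated as a result from \cite{H} and left without proof. So there is nothing in the paper to compare against beyond the citation; your argument supplies exactly the kind of elementary shortcut reasoning one would expect the cited reference to contain.
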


%\begin{corollary}
%\label{leaves}
%A connected closed graph $G$ has $\le 2$ leaves {\rm(}vertices of degree
%$1${\rm)}.
%\end{corollary}

%\begin{proof}
%Let $u,v,w$ be leaves and fix a closed labeling of $G$.  We may assume
%$u < v < w$, and let $u',v',w'$ be the unique vertices adjacent to
%$u,v,w$ respectively.  A shortest path from $u$ to $v$ is directed and
%must pass through $u'$ and $v'$, hence $u < u' \le v' < v$
%since $u < v$.  The same argument applied to $v$ and $w$ would imply
%$v < v' \le w' < w$.  Thus $v' < v$ and $v < v'$, so
%three leaves cannot exist.
%\end{proof}

%Since an induced subgraph of a closed graph is closed,
%Corollary~\ref{leaves} implies that every connected induced subgraph
%of a closed graph has $\le 2$ leaves, a property we call
%\emph{three-leaf free}.  This generalizes the claw-free property of
%closed graphs stated in Proposition~\ref{Hprop}.

\subsection{Neighborhoods and Intervals} Given a vertex $v \in V(G)$, the
\emph{neighborhood} of $v$ in $G$ is
\[
N_G(v) = \{ w \in V(G) \mid \{v,w\} \in E(G)\}.
\]
When $G$ is labeled and $i \in V(G) = [n]$, we have a disjoint union
\[
N_G(i) = N_G^>(i) \cup N_G^<(i),
\]
where
\[
N_G^>(i) = \{j \in N_G(i) \mid j > i\},\
N_G^<(i) = \{j \in N_G(i) \mid j < i\}.
\]
This is the notation used in \cite{CR}, where 
%the following basic result is also proved.
%
%\begin{proposition}
%\label{nbds}
%A labeling on a graph $G$ is closed if and only if for all vertices
%$i \in V(G) = [n]$, $N_G^>(i)$ and $N_G^<(i)$ are complete.
%\end{proposition}
%
it is shown that a labeling is closed if and only if $N_G^>(i)$ and
$N_G^<(i)$ are complete for all $i \in V(G) = [n]$.

Vertices $i,j \in [n]$ with $i \le j$ give the \emph{interval} $[i,j]
= \{k \in [n] \mid i \le k \le j\}$.  Here is a characterization of
when a labeling of a connected graph is closed.

\begin{proposition}
\label{nbdinterval}
A labeling on a connected graph $G$ is closed if and only if for all
$i \in V(G) = [n]$, $N_G^>(i)$ is complete and equal to
$[i+1,i+r]$, $r = |N_G^>(i)|$.
\end{proposition}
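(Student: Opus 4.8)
The plan is to lean on the characterization from \cite{CR}, which states that a labeling is closed if and only if $N_G^>(i)$ and $N_G^<(i)$ are complete for every $i \in [n]$. With this in hand, both directions of Proposition~\ref{nbdinterval} reduce to comparing the \emph{completeness} of these neighborhoods with the sharper \emph{interval} condition $N_G^>(i) = [i+1,i+r]$. The easy observation is that the real content lives entirely in the forward direction, where connectivity is needed to rule out ``gaps'' in $N_G^>(i)$; the reverse direction will follow formally from \cite{CR}.

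For the direction $(\Leftarrow)$, I would assume each $N_G^>(i)$ is complete and equals $[i+1,i+r]$, and then verify closedness through \cite{CR} by showing that each $N_G^<(i)$ is complete. Given $a<b<i$ with $a,b\in N_G^<(i)$, the edge $\{a,i\}$ shows $i\in N_G^>(a)$, so by hypothesis $N_G^>(a)=[a+1,a+r_a]$ is an interval containing $i$; since $a<b<i$ we get $b\in[a+1,i]\subseteq N_G^>(a)$, whence $\{a,b\}\in E(G)$. This argument uses only the interval hypothesis and requires no connectivity.

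For the direction $(\Rightarrow)$, assume the labeling is closed. By \cite{CR} each $N_G^>(i)$ is already complete, so it remains to show it is the full interval $[i+1,i+r]$ with $r=|N_G^>(i)|$. Writing $j=\max N_G^>(i)$, and noting $N_G^>(i)\subseteq[i+1,j]$ automatically, it suffices to prove the \textbf{no-gaps lemma}: if $\{i,j\}\in E(G)$ with $i<j$ and $i<k<j$, then $k\in N_G^>(i)$. I would argue by contradiction, assuming $\{k,j\}\notin E(G)$ and examining the finite nonempty set $S=\{x : i\le x<k,\ \{x,j\}\in E(G)\}$, which contains $i$. Let $x=\max S$. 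If $\{x,k\}\notin E(G)$, then connectivity gives a shortest $x$-to-$k$ path, which is directed and hence increasing by Proposition~\ref{directed}; its first vertex $u_1$ satisfies $x<u_1<k$, and since $u_1,j\in N_G^>(x)$, completeness forces $\{u_1,j\}\in E(G)$, so $u_1\in S$, contradicting the maximality of $x$. If instead $\{x,k\}\in E(G)$, then $k,j\in N_G^>(x)$ and completeness forces $\{k,j\}\in E(G)$, contradicting our assumption. Hence $\{k,j\}\in E(G)$, and then $i,k\in N_G^<(j)$ together with completeness of $N_G^<(j)$ yields $\{i,k\}\in E(G)$, i.e.\ $k\in N_G^>(i)$. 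Ranging over all such $k$ gives $N_G^>(i)=[i+1,j]$, so $j=i+r$.

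The main obstacle is exactly the no-gaps lemma: completeness of $N_G^>(i)$ by itself does not prevent the neighborhood from skipping a label, and the extremal argument above is precisely where \emph{connectivity} (to produce the shortest path) and \emph{directedness} (Proposition~\ref{directed}, to make that path monotone) must enter. I expect the bookkeeping around $x=\max S$ — in particular verifying that the newly produced neighbor $u_1$ lies strictly between $x$ and $k$ — to be the delicate point, while the remainder is a routine application of \cite{CR}.
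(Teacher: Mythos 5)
Your proposal is correct and follows essentially the same route as the paper: the converse is the same interval argument, and your ``no-gaps lemma'' is the paper's key step, proved by the same mechanism of stepping along a directed shortest path toward $k$ (Proposition~\ref{directed}) while preserving adjacency to $j$ via completeness of the up-neighborhoods. The only differences are cosmetic: you package the step as an extremal argument on $x=\max S$ where the paper runs an induction along a single fixed shortest path from $i$ to $k$, and you route the completeness facts through the \cite{CR} criterion rather than directly through Definition~\ref{closeddef}.
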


\begin{proof}
%First suppose that the labeling is closed.  
%Then $N_G^>(i)$ is complete by Proposition~\ref{nbds}. 
Assume that the labeling is closed.  Then Definition~\ref{closeddef}
easily implies that $N_G^>(i)$ is complete.  It remains to show that
$N_G^>(i)$ is an interval of the desired form.

Pick $j \in N_G^>(i)$ and $k\in[n]$ with $i<k<j$.  A shortest path
$P=i_0i_1 i_2\cdots i_m$ from $i = i_0$ to $k = i_m$ is directed by
Proposition~\ref{directed}.  Since $i<k$, we have
$i=i_0<i_1<i_2<\cdots<i_m=k$.  Thus $i_1\in \Linkp(i)$ and hence
$\{i_1,j \}\in E(G)$ since $\Linkp(i)$ is complete. Since $i_1<j$, we
have $j\in \Linkp(i_1)$.

We now prove by induction that $j \in \Linkp(i_u)$ for all $u =
1,\dots,m$.  The base case is proved in the previous paragraph.  Now
assume $j\in \Linkp(i_u)$.  Then $\{j,i_{u+1}\}\in E(G)$ since
$\{i_u,i_{u+1}\}\in E(G)$ and the labeling is closed.  This completes
the induction.  Since $k = i_m$, it follows that $j\in \Linkp(k)$.
Then we have $\{i,j\},\{k,j\}\in E(G)$ with $i<j>k$.  Thus $\{i,k\}\in
E(G)$ since the labeling is closed, so $k\in \Linkp(i)$ since $i<k$.
Hence $N_G^>(i)$ is an interval of the desired form.

Conversely, suppose that $N_G^>(i)$ is complete and $N_G^>(i) =
[i+1,\dots,i+r]$, $r = |N_G^>(i)|$, for all $i \in V(G)$.  Take
$\{j,i\}, \{i,k\} \in E(G)$ with $j > i < k$ or $j < i > k$.  The
former implies $\{j,k\} \in E(G)$ since $\Linkp(i)$ is complete.  For
the latter, assume $j<k$.  Then $j < k < i$ with $i \in \Linkp(j)$.
Since $\Linkp(j)$ is an interval containing $j+1$ and $i$, $\Linkp(j)$
also contains $k$.  Hence $\{j,k\} \in E(G)$.
%By Proposition~\ref{nbds}, it suffices to show that $\Linkm(i)$ is
%complete.  Given $j \ne k$ in $\Linkm(i)$, we can assume $j<k$.
%Then $j < k < i$ with $i \in \Linkp(j)$.  Since
%$\Linkp(j)$ is an interval starting at $j+1$ and contains $i$,
%it must contain $k$.  But $k \in \Linkp(j)$ implies $\{j,k\} \in
%E(G)$.  Hence $\Linkm(i)$ is complete.
\end{proof}  

\subsection{Layers} The following subsets of $V(G)$ will play a key
role in what follows.

\begin{definition} 
\label{layerdef}
Let $G$ be a connected graph labeled so that $V(G)
= [n]$.  Then the \emph{$N^{\mathit{th}}$ layer of $G$} is the set
\[
L_N = \{i \in [n] \mid d(i,1) = N\}.
\]
\end{definition}

Thus $L_N$ consists of all vertices that are distance $N$ from the
vertex $1$.  Note that $L_0 = \{1\}$ and $L_1 = \Link(1) = \Linkp(1)$.
Furthermore, since $G$ is connected, we have a disjoint union
\[
V(G) = L_0 \cup L_1 \cup \cdots \cup L_h,
\]
where $h = \max\{d(i,1) \mid i \in [n]\}$.  We omit the easy proof of the
following lemma.

\begin{lemma}
\label{layerlem}
Let $G$ be a connected graph labeled so that $V(G) = [n]$.  Then:
\begin{enumerate}
\item If $i \in L_N$ and $\{i,j\} \in E(G)$, then $j \in L_{N-1}$,
  $L_N$, or $L_{N+1}$.
\item If $P$ is a path in $G$ connecting $i \in L_N$ to $j \in L_M$
  with $N \le M$, then for every integer $N \le m \le M$, there exists
  $k \in V(P)$ with $k \in L_m$.
\end{enumerate}
\end{lemma}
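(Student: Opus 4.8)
The plan is to derive both statements from the single basic fact that adjacency in $G$ changes the distance to the vertex $1$ by at most one; this is just the triangle inequality for the graph metric $d(\cdot,1)$.

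For part (1), suppose $i \in L_N$, so that $d(i,1) = N$, and $\{i,j\} \in E(G)$, so that $d(i,j) = 1$. Concatenating a shortest path from $1$ to $i$ with the edge $\{i,j\}$ produces a walk from $1$ to $j$ of length $N+1$, whence $d(j,1) \le N+1$; the symmetric argument, concatenating a shortest $1$-to-$j$ path with $\{i,j\}$, gives $d(j,1) \ge N-1$. Therefore $d(j,1) \in \{N-1,\,N,\,N+1\}$, which is precisely the assertion that $j \in L_{N-1} \cup L_N \cup L_{N+1}$.

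For part (2), I would run a discrete intermediate value argument on top of part (1). Write $P = v_0 v_1 \cdots v_\ell$ with $v_0 = i \in L_N$ and $v_\ell = j \in L_M$, and set $f(t) = d(v_t,1)$, so that $f(0) = N$ and $f(\ell) = M$. Since each $\{v_t, v_{t+1}\}$ is an edge of $G$, part (1) forces $|f(t+1) - f(t)| \le 1$. Now fix an integer $m$ with $N \le m \le M$ and let $t_0$ be the least index with $f(t_0) \ge m$, which exists because $f(\ell) = M \ge m$. If $t_0 = 0$ then $f(0) = N$ together with $N \le m \le f(t_0)$ gives $m = N = f(t_0)$; otherwise minimality gives $f(t_0-1) \le m-1$, and the step bound $f(t_0) \le f(t_0-1) + 1 \le m$ combined with $f(t_0) \ge m$ yields $f(t_0) = m$. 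In either case $k = v_{t_0} \in L_m$, as desired.

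I anticipate no genuine obstacle here, which is why the authors call the proof easy: the entire content is the observation that $d(\cdot,1)$ is $1$-Lipschitz along edges, so the layer indices read off along any path form a sequence of integers that moves by at most one at each step. The only point requiring a line of care is the discrete intermediate value step, and that is dispatched by the ``first index reaching level $m$'' argument above.
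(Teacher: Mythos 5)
Your proof is correct and complete: part (1) is the $1$-Lipschitz property of $d(\cdot,1)$ along edges, and part (2) is the discrete intermediate value argument, with the ``first index reaching level $m$'' step handled properly. The paper explicitly omits the proof as easy, and your argument is evidently the intended one, so there is nothing to compare beyond noting agreement.
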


\begin{proposition}
\label{layerprop}
Let $G$ be a connected graph with a closed labeling satisfying $V(G) =
[n]$.  Then:
\begin{enumerate}
\item Each layer $L_N$ is complete.
\item If $d = \max\{L_N\}$, then $L_{N+1} = \Linkp(d)$.
\end{enumerate}
\end{proposition}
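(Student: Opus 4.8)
The plan is to prove both parts simultaneously by induction on $N$, with the inductive statement $S_N$ asserting that $L_N$ is complete and that $L_{N+1} = \Linkp(d)$ for $d = \max\{L_N\}$. The base case $N = 0$ is immediate: $L_0 = \{1\}$ is trivially complete, and since every neighbor of $1$ exceeds $1$ we have $\Linkp(1) = \Link(1) = L_1$. For the inductive step I would first observe that part (1) comes almost for free from part (2) one level down: if $L_N = \Linkp(\max\{L_{N-1}\})$, then $L_N$ is complete by Proposition~\ref{nbdinterval}. So the genuine content is establishing the equality $L_{N+1} = \Linkp(d)$ under the assumption that $L_N$ is complete, which I would prove by two inclusions.

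First I would show $\Linkp(d) \subseteq L_{N+1}$. Given $k \in \Linkp(d)$, Lemma~\ref{layerlem}(1) confines $k$ to $L_{N-1}$, $L_N$, or $L_{N+1}$. Since $k > d = \max\{L_N\}$, we cannot have $k \in L_N$. To rule out $k \in L_{N-1}$ (for $N \ge 2$; the case $N = 1$ is vacuous as then $k \in L_0 = \{1\} < d$), take a shortest path from $1$ to $k$, which is increasing by Proposition~\ref{directed}; its penultimate vertex $i_{N-2} \in L_{N-2}$ satisfies $i_{N-2} < k$, so it lies in $\Linkm(k)$ together with $d$. Completeness of $\Linkm(k)$ (from the characterization in \cite{CR}) forces $\{i_{N-2}, d\} \in E(G)$, whereupon Lemma~\ref{layerlem}(1) would place $d$ in $L_{N-3} \cup L_{N-2} \cup L_{N-1}$, contradicting $d \in L_N$. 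Hence $k \in L_{N+1}$.

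The reverse inclusion $L_{N+1} \subseteq \Linkp(d)$ is where the difficulty concentrates, because I must produce not merely an edge $\{d,k\}$ but the strict inequality $k > d$. Given $k \in L_{N+1}$, a shortest path from $1$ supplies a neighbor $i_N \in L_N$ with $i_N < k$ and $i_N \le d$. Using completeness of $L_N$ (to connect $i_N$ and $d$ when $i_N < d$) and of $\Linkp(i_N)$, I obtain $\{d,k\} \in E(G)$, the case $i_N = d$ being immediate. For the ordering, suppose instead $k < d$; then $k$ lies in $\Linkm(d)$, as does the penultimate vertex $j_{N-1} \in L_{N-1}$ of a shortest path from $1$ to $d$. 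Completeness of $\Linkm(d)$ forces $\{j_{N-1}, k\} \in E(G)$, and Lemma~\ref{layerlem}(1) then traps $k$ in $L_{N-2} \cup L_{N-1} \cup L_N$, contradicting $k \in L_{N+1}$. Thus $k > d$ and $k \in \Linkp(d)$, completing the equality and the induction.

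I expect the ordering argument in the reverse inclusion to be the crux: the layer hypotheses readily deliver adjacency, but pinning down that the new vertex exceeds $\max\{L_N\}$ requires exploiting completeness of the \emph{lower} neighborhoods $\Linkm$ to manufacture a forbidden edge between non-adjacent layers. The forward inclusion turns on the same device, so once this idea is in hand both directions run in parallel, and the remaining bookkeeping (small values of $N$ and nonemptiness of layers) is routine.
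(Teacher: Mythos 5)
Your proof is correct, but it routes around the paper's central device. The paper first isolates the statement \eqref{nextlayer} --- an edge joining $L_M$ to $L_{M+1}$ always points toward the higher layer, proved by appending that edge to a shortest path from $1$ and invoking Proposition~\ref{directed} --- and then uses it three times: to prove completeness of $L_{N+1}$ directly from the closed condition ($i>k<j$ forces $\{i,j\}\in E(G)$), and to settle both orientation questions in part (2). You never formulate \eqref{nextlayer}; instead, wherever you need an ordering fact you argue by contradiction via completeness of the \emph{lower} neighborhoods $\Linkm(\cdot)$ (the \cite{CR} characterization), manufacturing an edge between layers two apart that Lemma~\ref{layerlem}(1) forbids. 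You also get part (1) essentially for free by observing that $L_N=\Linkp(\max\{L_{N-1}\})$ is automatically complete in a closed labeling, which makes your induction $S_{N-1}(2)\Rightarrow S_N(1)\Rightarrow S_N(2)$ cleaner than the paper's separate induction for (1); the price is that your argument for the two inclusions in (2) is somewhat longer than the paper's, since each ordering claim needs its own contradiction argument rather than a single appeal to \eqref{nextlayer}. Both approaches ultimately rest on Proposition~\ref{directed} (you still need it to know that shortest paths from $1$ are increasing), and your edge cases for small $N$ are handled correctly, so the proof stands as written.
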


\begin{proof}
We first show that
\begin{equation}
\label{nextlayer}
r \in L_M,\ s \in L_{M+1},\ \{r,s\} \in E(G) \Longrightarrow r < s.
\end{equation}
To see why, take a shortest path from 1 to $r \in L_M$.  This path has
length $M$, so appending the edge $\{r,s\}$ gives a path of length
$M+1$ to $s$.  Since $s \in L_{M+1}$, this is a shortest path and
hence is directed by Proposition~\ref{directed}.  Thus $r<s$.

%Although (1) follows from (2) and Proposition~\ref{nbds}, we prove (1)
%separately since it is needed in the proof of (2).  We 
For (1), we use
induction on $N \ge 0$.  The base case is trivial since $L_0=\{1\}$.
Now assume $L_N$ is complete and take $i,j\in L_{N+1}$ with $i\neq j$.
A shortest path $P_1$ from $1$ to $i\in L_{N+1}$ has a vertex $k \in
L_N$ adjacent to $i$, and a shortest path $P_2$ from $1$ to $j\in
L_{N+1}$ has a vertex $l\in L_{N}$ adjacent to $j$.  Then $k<i$ and
$l<j$ by \eqref{nextlayer}.

If $k=l$, then $i>k<j$, which implies $\{i,j\}\in E(G)$ since the
labeling is closed.  If $k\neq l$, then $\{l,k\}\in E(G)$ since $L_N$
is complete.  Assume $l>k$.  Then $l>k<i$ and closed imply $\{l,i\}\in
E(G)$.  Since $l \in L_N$ and $i \in L_{N+1}$, we have $l<i$ by
\eqref{nextlayer}.  Then $i>l<j$ and closed imply $\{i,j\}\in E(G)$.
Hence $L_{N+1}$ is complete.

We now turn to (2).  To prove $L_{N+1} \subseteq \Linkp(d)$, $d =
\max\{L_N\}$, take $i \in L_{N+1}$.  A shortest path from $1$ to $i$
will have a vertex $k\in L_N$ such that $\{k,i\}\in E(G)$.  Then $k<i$
by \eqref{nextlayer}, hence $i\in \Linkp(k)$.  Also, $k\leq d$ since
$d=\max(L_N)$.  If $k=d$, then $i\in \Linkp(d)$.  If $k<d$, then
$\{k,d\}\in E(G)$ since $L_N$ is complete.  Then $i>k<d$ and closed
imply $\{d,i\}\in E(G)$, and then $d<i$ by \eqref{nextlayer}.  Thus
$i\in \Linkp(d)$.

To prove the opposite inclusion, take $i\in \Linkp(d)$.  Since
$\{d,i\}\in E(G)$ and $d \in L_N$, we have $i \in L_M$ for $M = N-1,N,
N+1$ by Lemma~\ref{layerlem}.  If $i\in L_{N-1}$, then
\eqref{nextlayer} would imply $i < d$, contradicting $i \in
\Linkp(d)$.  If $i\in L_N$, then $i \le \max\{L_N\} = d$, again
contradicting $i \in \Linkp(d)$.  Hence $i\in L_{N+1}$.
\end{proof}

\subsection{Longest Shortest Paths} 
When the labeling of a connected graph is closed, the diameter of the
graph determines the number of layers as follows.

\begin{proposition}
\label{diameters}
Let $G$ be a connected graph with a closed labeling.  Then:
\begin{enumerate}
\item $\Diam(G)$ is the largest integer $h$ such that $L_h\neq
  \emptyset$.
\item If $P$ is a longest shortest path of $G$, then one endpoint of
  $P$ is in $L_0$ or $L_1$ and the other is in $L_h$, where
  $h=\Diam(G)$.
\end{enumerate}
\end{proposition}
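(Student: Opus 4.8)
The plan is to handle part (1) first by bounding the distance between any two vertices in terms of the layers containing them, and then to read off part (2) from exactly the same bound.

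For part~(1), the inequality $\Diam(G) \ge h$ is immediate: any $w \in L_h$ satisfies $d(1,w) = h$. For the reverse inequality I would take arbitrary vertices $v \in L_M$ and $w \in L_{M'}$, where I may assume $M \le M'$ after possibly interchanging them, and show $d(v,w) \le h$. The crucial device is the completeness of layers from Proposition~\ref{layerprop}(1). Fix a shortest path $Q$ from $1$ to $w$; it has length $M'$ and runs from $L_0$ to $L_{M'}$, so Lemma~\ref{layerlem}(2) supplies a vertex $u \in V(Q) \cap L_M$. Since every subpath of a shortest path is again shortest, the portion of $Q$ from $u$ to $w$ has length $M' - M$, whence $d(u,w) \le M' - M$. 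Because $v$ and $u$ both lie in the complete layer $L_M$, we have $d(v,u) \le 1$, and therefore $d(v,w) \le d(v,u) + d(u,w) \le 1 + (M'-M)$. When $M \ge 1$ this yields $d(v,w) \le 1 + M' - M \le h$ using $M' \le h$; and when $M = 0$ we have $v = 1$, so $d(v,w) = M' \le h$ directly. In either case $\Diam(G) \le h$, completing part~(1).

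For part~(2), I would apply the same bound to the two endpoints of a longest shortest path $P$, writing them as $v \in L_M$ and $w \in L_{M'}$ with $M \le M'$; by part~(1) these satisfy $d(v,w) = h$. If $M = 0$, then $v = 1 \in L_0$ and $h = d(1,w) = M'$, so $w \in L_h$. If $M \ge 1$, the inequality $h = d(v,w) \le 1 + (M'-M)$ combined with $M' \le h$ forces $M \le 1$, hence $M = 1$, and then $1 + M' - 1 \ge h$ gives $M' = h$, so $v \in L_1$ and $w \in L_h$. Thus in every case one endpoint lies in $L_0$ or $L_1$ and the other in $L_h$.

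The entire argument rests on the single estimate $d(v,w) \le 1 + (M'-M)$, so the only point requiring care is the bookkeeping that a shortest path from $1$ to $w$ meets the layer $L_M$ in a vertex $u$ that is at distance at most $M'-M$ from $w$; this follows cleanly from Lemma~\ref{layerlem}(2) together with the standard fact that subpaths of shortest paths are shortest. Everything after that is routine case analysis on the values of $M$, and I do not anticipate any genuine obstacle.
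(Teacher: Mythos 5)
Your proof is correct, and it follows the same overall strategy as the paper: establish the key estimate $d(v,w)\le (M'-M)+1$ for $v\in L_M$, $w\in L_{M'}$, $M\le M'$, and then read off both parts by case analysis on $M$. The difference lies in how the connecting path of length at most $M'-M+1$ is produced. The paper builds it explicitly by chaining through the layer maxima $d_u=\max(L_u)$, using Proposition~\ref{layerprop}(2) (that $L_{u+1}=\Linkp(d_u)$) to guarantee each consecutive edge; you instead take a shortest path $Q$ from $1$ to $w$, locate via Lemma~\ref{layerlem}(2) the vertex $u$ of $Q$ lying in $L_M$ (which sits at distance exactly $M'-M$ from $w$ because subpaths of shortest paths are shortest), and then jump from $v$ onto $Q$ at $u$ using only the completeness of $L_M$ from Proposition~\ref{layerprop}(1). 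Your route is marginally lighter in its prerequisites, since it never invokes Proposition~\ref{layerprop}(2), and it also folds the paper's separate case $N=M$ into the general estimate; the paper's construction, on the other hand, exhibits a concrete canonical path through the layers, which is in the same spirit as the layer machinery used elsewhere in Section~\ref{properties}. Both arguments are complete and the case analysis at the end is identical.
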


\begin{proof}
For (1), let $h$ be the largest integer with $L_h\neq \emptyset$.
Since points in $L_h$ have distance $h$ from $1$, we have $h \le
\Diam(G)$.

For the opposite inequality, it suffices to show that $d(i,j) \le h$
for all $i,j \in V(G)$ with $i \ne j$.  We can assume $G$ has more
than one vertex, so that $h\geq 1$.  Suppose $i\in L_N$ and $j\in L_M$
with $N\leq M$.  If $N=0$, then $i=1$ and $d(i,j) = d(1,j) = M \le h$
since $j \in L_M$.  Also, if $M=N$, then $i,j\in L_N$, so that $d(i,j) =
1 \le h$ since $L_N$ is complete by Proposition~\ref{layerprop}.
Finally, if $0<N<M$, let $d_u=\max(L_u)$ for each integer $u$.  By
Proposition~\ref{layerprop}, we know that $j \in \Linkp(d_{M-1})$.
Hence, if $i\neq d_N$, then $P=id_Nd_{N+1}\cdots d_{M-2}d_{M-1}j$ is a
path of length $M-N+1$. If $i=d_N$, then $P=id_{N+1}\cdots d_{M-1}j$
is a path of length $M-N$. Thus we have a path from $i$ to $j$ of
length at most $M-N+1$, so that $d(i,j) \le M-N+1 \le M \le h$.

For (2), let $i$ and $j$ be the endpoints of the longest shortest path
$P$ with $i\in L_N$, $j\in L_M$ and $N\leq M$.  If $0 < N < M$, then
the previous paragraph implies  
\[
\Diam(G) = d(i,j) \le M-N+1 \le M \le h = \Diam(G),
\]
which forces $N = 1$ (so $i \in L_1$) and $M = h$ (so $j \in L_h$).
The remaining cases $N = 0$ and $N = M$ are straightforward and are
left to the reader.   
\end{proof}

Recall from Definition~\ref{narrowdef} that a connected graph $G$ is
narrow when every vertex is distance at most one from every longest
shortest path.  Narrowness is a key property of connected closed
graphs. 

\begin{theorem}
\label{narrowthm}
Every connected closed graph is narrow.
\end{theorem}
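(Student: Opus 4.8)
The plan is to exploit the layer decomposition $V(G) = L_0 \cup \cdots \cup L_h$, where $h = \Diam(G)$, together with two facts already in hand: each layer $L_N$ is complete (Proposition~\ref{layerprop}(1)), and a longest shortest path is forced to meet an entire range of consecutive layers. Fix a closed labeling with $V(G)=[n]$, and let $P$ be an arbitrary longest shortest path. By Proposition~\ref{diameters}, the nonempty layers are exactly $L_0,\dots,L_h$, the path $P$ has length $h$, and its endpoints lie in $L_{m_0}$ and $L_h$ for some $m_0 \in \{0,1\}$. The trivial cases $h \le 1$ (a single vertex or a complete graph) can be disposed of at the outset.

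The first key step is to apply Lemma~\ref{layerlem}(2) to $P$, viewed as a path connecting its endpoint in $L_{m_0}$ to its endpoint in $L_h$. This yields, for every integer $m$ with $m_0 \le m \le h$, a vertex of $P$ lying in $L_m$. In other words, $P$ visits every nonempty layer with the single possible exception of $L_0$ (which can be skipped only when $m_0 = 1$).

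Now take an arbitrary vertex $v \in V(G)$, say $v \in L_N$; since $L_h$ is the last nonempty layer, $N \le h$. If $N \ge m_0$, then by the previous step $P$ contains some $w \in L_N$, and completeness of $L_N$ forces either $v = w \in V(P)$ or $\{v,w\} \in E(G)$, so $v$ is distance at most one from $P$. The only remaining case is $N < m_0$, which can occur only when $m_0 = 1$ and $N = 0$, that is, $v = 1$. Here I would use that $L_1 = \Linkp(1)$ is precisely the set of neighbors of $1$: the endpoint $a$ of $P$ in $L_1$ then satisfies $\{1,a\} \in E(G)$ with $a \in V(P)$, so $v = 1$ is again distance at most one from $P$.

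The substantive input is Proposition~\ref{diameters}(2), which confines the endpoints of $P$ to $L_0 \cup L_1$ and to $L_h$; once that is granted, the rest is bookkeeping driven by completeness of the layers. Accordingly, I expect the one genuinely delicate point to be the endpoint that sits in $L_1$ rather than $L_0$, since it leaves the vertex $1$ potentially unvisited by $P$; this case is rescued only by the identity $L_1 = \Linkp(1)$, which makes $1$ adjacent to that very endpoint.
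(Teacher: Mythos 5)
Your proposal is correct and follows essentially the same route as the paper: Proposition~\ref{diameters}(2) to locate the endpoints of $P$, Lemma~\ref{layerlem}(2) to show $P$ meets every layer $L_1,\dots,L_h$, completeness of each layer (Proposition~\ref{layerprop}(1)) to handle a vertex in $L_N$ for $N\ge 1$, and $L_1=N_G(1)$ to handle the vertex $1$. The case split on $m_0$ and the remark about $h\le 1$ are just slightly more explicit bookkeeping than the paper's version.
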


\begin{proof}
Let $G$ be a connected graph with a closed labeling. Pick a vertex
$i\in V(G)$ and a longest shortest path $P$.  Since $G$ is connected,
$i\in L_N$ for some integer $N$.   By Proposition~\ref{diameters}, the
endpoints of $P$ lie in $L_0$ or $L_1$ and  $L_h$, $h = \Diam(G)$.
Then Lemma~\ref{layerlem} implies that $P$ has a vertex $i_M$ in $L_M$
for every $1 \le M \le h$.   

If $N \ge 1$, then either $i = i_N \in V(P)$ or $i \ne i_N$, in which
case $\{i,i_N\} \in E(G)$ since $L_N$ is complete by
Proposition~\ref{layerprop}.  On the other hand, if $N = 0$, then
$i\in L_0$, hence $i=1$.  Then $\{i,i_1\} = \{1,i_1\} \in E(G)$ since
$i_1 \in L_1 = \Link(1)$.  In either case, $i$ is distance at most one
from $P$.   
\end{proof}

\section{A Labeling Algorithm}
\label{algorithm}

 We introduce Algorithm~\ref{alg:Labeling}, which labels the vertices
 of a connected graph.  This algorithm will play a key role in the
 proof of Theorem~\ref{mainthm}.

%\vskip-1pt

\begin{algorithm}[ht]
\SetAlgoLined
 \KwIn{A connected graph $G$ with $n$ vertices$\strut$} \label{alg:input}
 \KwOut{A labeling $V(G)=[n]$ and a function $l:[n] \rightarrow
   \{0,\ldots,n-1\}$$\strut$} \label{alg:output} 
$i:=1$\;
$j:=0$\;
$v_0:=$ endpoint of a longest shortest path with minimal
 degree\;\label{alg:possible1} 
label $v_0$ as $i$\; \label{alg:label1}
$l(i):=j$\;\label{alg:function1}
$i:=i+1$\;
$J:=\{v_0\}$\hfill \tcc*[h]{initial value of set of labeled vertices}\;
$j:=j+1$\;\label{alg:labelj1}
 \While{$j\leq |V(G)|$}{ \label{alg:firstLoop}
$S:=\Link(j)\setminus J$\hfill \tcc*[h]{unlabeled vertices adjacent to $j$}\;\label{alg:Sset} %%Set of all unlabled
   %%things connected to j 
  \While{$S\neq \emptyset$}{ \label{alg:secondLoop}
$v:=$ pick $v\in S$ such that $|\Link(v)\setminus J| = \min_{u\in
      S}\{|\Link(u)\setminus J|\}$\;\label{alg:beginSecondLoop} 
label $v$ as $i$\; \label{alg:label2}
$l(i):=j$\;\label{alg:function2}
$i:=i+1$\;\label{alg:iIncrement}
$J:=J \cup \{v\}$\hspace*{3pt} \tcc*[h]{add $v$ to labeled
  vertices}\; 
$S:=S\setminus \{v\}$\hspace*{3pt} \tcc*[h]{remove $v$ from unlabeled
  adjacent vertices}\;\label{alg:endSecondLoop}
  }
$j:=j+1$\;
}
\caption{Labeling Algorithm (comments enclosed by {\tt /*} and {\tt */})$\strut$}
\label{alg:Labeling}
\end{algorithm}

%\vskip-5pt

The algorithm works as follows.  Among the endpoints of all longest
shortest paths, we select one of minimal degree and label it as $1$.
%First a candidate for $1$ is found by choosing an endpoint
%of a longest shortest path of minimal degree.  
We then go through the vertices in $\Link(1)$ and label them
$2,3,\ldots$, first labeling vertices with the fewest number of edges
connected to unlabeled vertices.  This process is repeated for the
unlabeled vertices connected to vertex~$2$, and vertex $3$, and so on
until every vertex is labeled.  Furthermore, every vertex will be
labeled because we first label everything in $\Linkp(1)$, then label
everything in $\Linkp(2)$ not already labeled, and so on.  Since the
input graph is connected, this process must eventually reach all of
the vertices.  Hence we get a labeling of $G$.

The following lemma explains the function $l$ that appears in 
Algorithm~\ref{alg:Labeling}.

%Claim of meaning of function $l$
\begin{lemma}
\label{claim:meaningOfFunction}
Let $G$ be a connected graph with the labeling from
Algorithm~\ref{alg:Labeling}.  Then:
\begin{enumerate}
\item $l(1)=0$, and for every $i\in [n]$
with $i>1$, $l(i)=\min(\Link(i))$.
\item If $l(t)<l(s)$, then $t<s$. 
\end{enumerate}
\end{lemma}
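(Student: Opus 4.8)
The key to both parts is to track the two running variables in Algorithm~\ref{alg:Labeling}: the counter $i$ is always the next label to be assigned, and $j$ is the label of the current \emph{parent} vertex, i.e., the vertex whose unlabeled neighbors are being labeled. The assignments $l(i):=j$ on lines~\ref{alg:function1} and~\ref{alg:function2} simply record that the vertex receiving label $i$ was drawn from $\Link(j)$. My plan is to read off both statements from structural features of this bookkeeping. For part (1), $l(1)=0$ holds by construction (line~\ref{alg:function1}). Now fix $i>1$ and let $v$ be the vertex labeled $i$, pulled from $S=\Link(j)\setminus J$ during the outer iteration with parent $j=l(i)$. Since $v\in\Link(j)$, symmetry of adjacency gives $j\in\Link(i)$, so $l(i)\ge\min(\Link(i))$. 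The substance is the reverse inequality. Writing $m=\min(\Link(i))$, I will argue that $v$ cannot survive unlabeled past the outer iteration with parent $m$: any parent under which $v$ could have been labeled earlier is itself a neighbor of $v$, hence has label $\ge m$, so $v$ is still unlabeled when the outer loop reaches $j=m$. At that point $v\in\Link(m)$ and $v\notin J$, so $v\in S$, and since the inner loop (lines~\ref{alg:secondLoop}--\ref{alg:endSecondLoop}) runs until $S=\emptyset$, $v$ is labeled during this very iteration, forcing $l(i)=m$. Combining the two inequalities gives $l(i)=\min(\Link(i))$.

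For part (2), the crucial observation is that $j$ never decreases during the run of the algorithm: it is set to $0$, then to $1$, and thereafter is changed only by $j:=j+1$ at the bottom of the outer loop, staying constant throughout each inner loop. Since labels are handed out in strictly increasing order of $i$, the recorded parents satisfy $l(1)\le l(2)\le\cdots\le l(n)$; that is, $l$ is a nondecreasing function of the label. Consequently $t\ge s$ would give $l(t)\ge l(s)$, so the hypothesis $l(t)<l(s)$ forces $t<s$, as claimed.

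The only step that requires genuine care is the reverse inequality in part (1)---namely that the first time the outer loop meets a neighbor of $v$ is exactly when $v$ gets labeled. This hinges on two features working together: the outer loop scans potential parents $j=1,2,\dots$ in increasing order, and the inner loop exhaustively labels \emph{every} currently unlabeled neighbor of the current parent before $j$ advances (because $S$ is initialized once on line~\ref{alg:Sset} and only shrinks). If the inner loop left some neighbor of $j$ unlabeled, $v$ could be deferred to a larger parent and the identity $l(i)=\min(\Link(i))$ would break. Everything else is direct bookkeeping on the variables $i$ and $j$.
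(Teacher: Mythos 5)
Your proof is correct and follows essentially the same route as the paper's: part (1) rests on the observation that the inner loop exhausts all unlabeled neighbors of the current parent $j$ before $j$ advances, so the vertex labeled $i$ is first reached (and hence labeled) exactly when $j$ equals its smallest neighbor; part (2) rests on the monotonicity of $j$ together with the fact that labels are assigned in increasing order. No gaps.
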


\begin{proof}
Algorithm~\ref{alg:Labeling} defines $l(1)=0$.  Now assume $i>1$ and
let $v$ be the vertex assigned the label $i$.  By
lines~\ref{alg:label2} and \ref{alg:function2} of the algorithm, we
need to show that when the label $i$ is assigned to $v$, the variable
$j$ equals $\min(\Link(i))$.  This follows because for any smaller
value $j' < j$, line~\ref{alg:secondLoop} implies that everything in
the neighborhood of $j'$ is labeled before $j'$ is incremented.
However, lines~\ref{alg:Sset}--\ref{alg:beginSecondLoop} show that $v$
is adjacent to $j$ and unlabeled at the start of the loop on
line~\ref{alg:secondLoop}.  Hence $v$ cannot link to any smaller value
of $j$, and since $v$ has label $i$, $j = \min(\Link(i))$ follows.

(2) Suppose that $s,t\in [n]$ satisfy $l(t)<l(s)$.  Since
$l(t)$ (resp.\ $l(s)$) is the value of $j$ when the label $t$
(resp.\ $s$) was assigned in Algorithm~\ref{alg:Labeling},
$l(t)<l(s)$ implies that the label $s$ was assigned later than $t$ in
the algorithm.  Since the labels are assigned in numerical order, we
must have $t < s$. \end{proof}

The labeling produced by Algorithm~\ref{alg:Labeling} allows us to
define the layers $L_N$.  These interact with the function $l$ as follows:

\goodbreak

\begin{lemma}
\label{claim:orderingClaim}
Let $G$ be a connected graph with the labeling from
Algorithm~\ref{alg:Labeling}.  Then:
\begin{enumerate}
\item If $t\in L_N$, then $l(t)\in L_{N-1}$ if $N>0$. 
\item If $t\in L_N$ and $s\in L_M$ with $N<M$, then $t<s$. 
\end{enumerate}
\end{lemma}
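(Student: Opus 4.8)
The plan is to prove both statements by exploiting the structure of Algorithm~\ref{alg:Labeling}, using Lemma~\ref{claim:meaningOfFunction} as the main tool. Recall that $l(t) = \min(N_G(t))$ for $t > 1$, so each assertion about $l$ translates into an assertion about the smallest neighbor of a vertex. Since the two parts concern how the layers $L_N$ (distances from vertex $1$) interact with the labeling order, the guiding idea is that the algorithm labels vertices layer-by-layer: everything in $L_1 = N_G(1)$ is labeled while $j=1$, then the neighbors of those vertices (which by Lemma~\ref{layerlem} lie in $L_1 \cup L_2$) are processed, and so on. I will want to make precise the claim that labels are assigned in non-decreasing layer order.

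For part (1), I would fix $t \in L_N$ with $N > 0$ and set $s = l(t) = \min(N_G(t))$. Since $\{s,t\} \in E(G)$ and $t \in L_N$, Lemma~\ref{layerlem}(1) forces $s \in L_{N-1} \cup L_N \cup L_{N+1}$. I must rule out $s \in L_N$ and $s \in L_{N+1}$. The key observation is that $t$ has a neighbor in $L_{N-1}$ (take the second-to-last vertex on a shortest path from $1$ to $t$), so $N_G(t)$ meets $L_{N-1}$; I then need to argue that the minimal label among $N_G(t)$ is realized in the lowest available layer. This is where part (2) of the very lemma we are proving would be convenient, but to avoid circularity I expect to prove a self-contained statement first: by induction on $N$, that every vertex in $L_{N-1}$ receives a smaller label than every vertex in $L_N$. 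Granting that, any neighbor of $t$ lying in $L_{N-1}$ has a strictly smaller label than any neighbor of $t$ lying in $L_N$ or $L_{N+1}$, so the minimum $s = \min(N_G(t))$ must lie in $L_{N-1}$, proving (1).

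Part (2) is the heart of the matter, and I expect the inductive step establishing ``lower layer implies smaller label'' to be the main obstacle. I would prove by induction on $M$ the statement: for all $N < M$, every $t \in L_N$ satisfies $t < s$ for every $s \in L_M$. The base cases involve $L_0 = \{1\}$ and $L_1 = N_G^{>}(1)$, which are labeled first by the algorithm, hence carry the smallest labels. For the inductive step, given $s \in L_M$, I would track when $s$ receives its label: the variable $j$ at that moment equals $l(s) = \min(N_G(s))$, which by part (1) lies in $L_{M-1}$. Any vertex $t$ in a strictly lower layer $L_N$ with $N < M$ was either already labeled when $j$ reached a value in $L_{M-1}$, or lies in a layer $L_N$ whose vertices (by the inductive hypothesis and part (1)) were all assigned while $j$ ranged over labels in layers below $L_{M-1}$.

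The delicate point is that the algorithm does not literally process layers in blocks: when handling a vertex $u \in L_{M-1}$ with label $j$, its unlabeled neighbors $N_G(u) \setminus J$ may lie in $L_{M-1}$ or $L_M$, so some $L_{M-1}$ vertices can be labeled after some $L_M$ vertices. I expect to resolve this by a careful bookkeeping argument: I would show that when the inner \texttt{while} loop on line~\ref{alg:secondLoop} is driven by $j = l(s) \in L_{M-1}$, any vertex $t$ in a strictly lower layer than $L_M$ has $l(t) \in L_{N-1}$ with $N-1 < M-1$, so by the inductive hypothesis $l(t) < l(s)$, and then Lemma~\ref{claim:meaningOfFunction}(2) immediately gives $t < s$. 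Thus the whole argument funnels through comparing the values $l(t)$ and $l(s)$ in lower layers and invoking Lemma~\ref{claim:meaningOfFunction}(2); the induction lowers the layer index by one at each step, so the comparison always reduces to an already-established case.
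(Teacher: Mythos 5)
Your overall strategy is the same as the paper's: a simultaneous induction over layers in which part (1) is obtained from $l(t)=\min(\Link(t))$ together with Lemma~\ref{layerlem}(1), and part (2) is reduced to the comparison $l(t)<l(s)$ one layer down, after which Lemma~\ref{claim:meaningOfFunction}(2) converts this into $t<s$. The one genuine problem is the bookkeeping of the induction, and as you have set it up it is circular. You induct on the \emph{upper} layer index $M$ and, in the inductive step for (2), invoke part (1) at layer $M$ to place $l(s)$ exactly in $L_{M-1}$. But your own proof of part (1) at layer $M$ requires knowing that a neighbor of $t\in L_M$ lying in $L_{M-1}$ has a smaller label than a neighbor lying in $L_{M+1}$ --- a part-(2) statement whose upper layer is $M+1$, which is not covered by an inductive hypothesis indexed by $M$. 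So the dependency chain runs (2) at $M$ $\rightarrow$ (1) at $M$ $\rightarrow$ (2) at $M+1$ $\rightarrow\cdots$, and the induction is not well-founded as described.

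The repair is exactly the paper's organization. Induct on the \emph{lower} layer index: assume (1) and (2) for all $N\le N_0$ (and all $M$). For (1) at $N_0+1$, take the neighbor $v\in L_{N_0}$ of $t$ coming from a shortest path; if $l(t)\in L_u$ with $u>N_0$, the hypothesis (2) at level $N_0$ gives $v<l(t)$, contradicting $l(t)=\min(\Link(t))\le v$, and Lemma~\ref{layerlem}(1) rules out $u<N_0$. For (2) at $N_0+1$, do \emph{not} try to pin down $l(s)$ exactly: Lemma~\ref{layerlem}(1) alone gives $l(s)\in L_u$ with $u\ge M-1>N_0$, which together with $l(t)\in L_{N_0}$ and the hypothesis (2) at level $N_0$ yields $l(t)<l(s)$, hence $t<s$. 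With that adjustment your argument goes through and coincides with the paper's proof.
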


\begin{proof} 
We prove (1) and (2) simultaneously by induction on $N \ge 1$ (the
case $N = 0$ of (2) is trivially true).  The first time 
Algorithm~\ref{alg:Labeling} gets to Line~\ref{alg:Sset}, we have $S =
\Link(1)\setminus J = \Link(1) = L_1$.  Every vertex in $S = L_1$, is
labeled during the loop starting on Line~\ref{alg:firstLoop}, so $l(t)
= 1$ for all $t \in L_1$.  Hence (1) holds when $N =1$.  Also, if
$s\in L_M$ with $1<M$, then the vertex $s$ is not labeled at this
stage.  Since labels are assigned in numerical order, we must have
$t<s$ for all $t \in L_1$.  Hence (2) holds when $N = 1$.

Now assume that (1) and (2) hold for $M$ and every $N\leq N_0$.  Given
$t\in L_{N_0+1}$, a shortest path from 1 to $t$ gives $v\in L_{N_0}$
with $v \in \Link(t)$.  Since $l(t)=\min(\Link(t))$ by
Lemma~\ref{claim:meaningOfFunction}(1), we have $l(t)\leq v$.  We have
$l(t) \in L_u$ for some $u$.  If $u > N_0$, then the inductive hypothesis
for (2) would imply $l(t) > v$, which contradicts $l(t)\leq v$.  Hence 
$l(t) \in L_u$ for some $u
\le N_0$.  But $t \in L_{N_0+1}$ and $\{t,l(t)\} \in E(G)$ imply $l(t)
\in L_u$ for $u \ge N_0$ by Lemma~\ref{layerlem}(1).  Hence $l(t) \in
L_{N_0}$, proving (1) for $N_0+1$.

Turning to (2), pick $t\in L_{N_0+1}$ and $s\in L_M$ with $N_0+1<M$.
We just showed that $l(t)\in L_{N_0}$, and Lemma~\ref{layerlem}(1)
implies that $l(s)\in L_{u}$, $u \ge M-1$, since $s \in L_M$.  Then
$N_0 < M-1 \le u$, so our inductive hypothesis, applied to $l(t)\in
L_{N_0}$ and $l(s) \in L_u$, implies $l(t)<l(s)$.  Then $t<s$ by
Lemma~\ref{claim:meaningOfFunction}(2), proving (2) for $N_0+1$.
\end{proof}

\section{Proof of the Main Theorem}
\label{characterize}

We now turn to the main result of the paper.  Theorem~\ref{mainthm}
from the Introduction states that a connected graph is closed if and
only if it is chordal, claw-free and narrow.  One direction is now
proved, since closed graphs are chordal and claw-free by
Proposition~\ref{Hprop}, and connected closed graphs are narrow by
Theorem~\ref{narrowthm}. 

The proof of converse is harder.  The key idea that the labeling
constructed by Algorithm~\ref{alg:Labeling} is closed when the input
graph is chordal, claw-free and narrow.  Thus the proof of
Theorem~\ref{mainthm} will be complete once we prove the following
result.

\begin{theorem}
\label{converse}
Let $G$ be a connected, chordal, claw-free, narrow graph.  Then the
labeling produced by Algorithm~\ref{alg:Labeling} is closed.
\end{theorem}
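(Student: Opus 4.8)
The plan is to prove that the labeling from Algorithm~\ref{alg:Labeling} meets the criterion of Proposition~\ref{nbdinterval}: for every $i$, the set $\Linkp(i)$ is complete and equals the interval $[i+1,i+r]$, $r=|\Linkp(i)|$. The organizing device is the layer decomposition $V(G)=L_0\cup\cdots\cup L_h$ together with Lemma~\ref{claim:orderingClaim}. Applying Lemma~\ref{claim:orderingClaim}(2) with $M=N+1$ shows that every label of $L_N$ is smaller than every label of $L_{N+1}$, so the labels fill the layers in consecutive increasing blocks; set $d_N=\max L_N$. By Lemma~\ref{layerlem}(1) the neighbors of a vertex $i\in L_N$ lie in $L_{N-1}\cup L_N\cup L_{N+1}$, and its $L_{N-1}$-neighbors carry labels below the $L_N$-block and so do not lie in $\Linkp(i)$. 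Thus the entire theorem reduces to two structural claims: \emph{(I)} each layer $L_N$ is complete, and \emph{(II)} (monotone nesting) if $i\le a$ lie in the same layer $L_N$, then $\Link(i)\cap L_{N+1}\subseteq\Link(a)\cap L_{N+1}$.

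Granting \emph{(I)} and \emph{(II)}, I would assemble Proposition~\ref{nbdinterval} as follows. Completeness \emph{(I)} makes the up-neighbors of $i$ inside its own layer exactly $[i+1,d_N]$, so
\[
\Linkp(i)=[i+1,d_N]\cup\bigl(\Link(i)\cap L_{N+1}\bigr).
\]
By Lemma~\ref{claim:meaningOfFunction}, $l(b)=\min(\Link(b))$, which lies in the previous layer (Lemma~\ref{claim:orderingClaim}(1)), and $b<b'$ forces $l(b)\le l(b')$. Hence if $b<b'$ in $L_{N+1}$ with $b'\in\Link(i)$, then $l(b)\le l(b')\le i$ and \emph{(II)} applied to $l(b)\le i$ gives $b\in\Link(l(b))\cap L_{N+1}\subseteq\Link(i)$; therefore $\Link(i)\cap L_{N+1}$ is an initial block $[d_N+1,d_N+s]$ of $L_{N+1}$. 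The displayed formula then reads $\Linkp(i)=[i+1,d_N+s]$, an interval of the required shape. Completeness of $\Linkp(i)$ follows too: pairs within $L_N$ or within $L_{N+1}$ are adjacent by \emph{(I)}, while a mixed pair $a\in(i,d_N]$, $b\in\Link(i)\cap L_{N+1}$ is adjacent because \emph{(II)} yields $b\in\Link(a)$. Proposition~\ref{nbdinterval} then gives that the labeling is closed.

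The real content, and the main obstacle, is establishing \emph{(I)} and \emph{(II)}; this is where chordal, claw-free and narrow are used. For \emph{(I)} I would induct on $N$, the base $L_0=\{1\}$ being trivial. For $i\ne j$ in $L_{N+1}$, shortest paths to the root $1$ supply $k,l\in L_N$ with $k\in\Link(i)$ and $l\in\Link(j)$, and $k,l$ are equal or adjacent by the inductive hypothesis. If $k=l$, then $k$ together with one of its $L_{N-1}$-neighbors forms a claw with $i$ and $j$ unless $\{i,j\}\in E(G)$, so claw-freeness settles this case. The stubborn case is $k\ne l$, where $i-k-l-j$ is an induced path as soon as $\{i,j\}\notin E(G)$; this is precisely the configuration $D-B-C-F$ occurring in the non-narrow graph \eqref{ex2}, so chordality by itself cannot exclude it. I expect to invoke narrowness here: a longest shortest path $P$ meets each layer in a single vertex, and the distance-at-most-one condition on $i$ and $j$ relative to $P$ should either produce a common neighbor (returning us to the claw case) or force a chordless cycle, contradicting chordality. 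Making this dichotomy precise is the crux of the whole proof.

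For \emph{(II)} I would read off the order in which Algorithm~\ref{alg:Labeling} fills a layer: the $L_{N+1}$-block is generated by scanning $L_N$ in increasing label order and, for each vertex, labeling its still-unlabeled neighbors in order of fewest unlabeled neighbors. The assertion that earlier-labeled vertices of $L_N$ have smaller up-neighbor sets must be extracted from this ``fewest unlabeled neighbors first'' rule, used in concert with claw-freeness and the completeness of $L_N$ and $L_{N+1}$ from \emph{(I)}: a vertex $b$ adjacent to $i$ but not to $a$ (for $i\le a$ in $L_N$) should either violate the minimality governing the algorithm's choices or produce a forbidden claw. This step, like \emph{(I)}, is easy to state but demands careful case analysis, and the two together form the heart of the argument.
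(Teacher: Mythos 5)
Your reduction of the theorem to the two claims \emph{(I)} (each layer $L_N$ is complete) and \emph{(II)} (monotone nesting of up-neighborhoods within a layer) is logically sound, and the assembly of the criterion of Proposition~\ref{nbdinterval} from \emph{(I)}, \emph{(II)}, Lemma~\ref{layerlem}(1) and Lemmas~\ref{claim:meaningOfFunction}--\ref{claim:orderingClaim} is correct as written. This is the same overall strategy as the paper's (verify interval-plus-completeness of every $\Linkp(m)$ for the algorithm's labeling), merely reorganized by layer rather than by an induction on the label $m$. But that assembly is only bookkeeping: all three hypotheses (chordal, claw-free, narrow) and both nonobvious choices made by the algorithm enter the argument solely through \emph{(I)} and \emph{(II)}, and those are not proved.

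Moreover, the sketches you offer for them do not survive scrutiny. For \emph{(I)}, your claw argument in the case $k=l$ requires an $L_{N-1}$-neighbor of $k$, which does not exist when $N=0$; so your induction already breaks at the first nontrivial step, completeness of $L_1$. That is exactly the paper's base case \eqref{basecase}, which occupies roughly half of its proof: it uses the choice in Line~\ref{alg:possible1} that vertex $1$ is a \emph{minimal-degree} endpoint of a longest shortest path (this is what produces the auxiliary vertex $t_2$ via $\deg(1)\le\deg(t)$), then narrowness to force $t_2$ to attach to the path $P$, then chordality applied to $4$-, $5$-, and $6$-cycles to reach a contradiction. Your sketch never invokes the minimal-degree choice, without which completeness of $L_1$ can fail. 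For $N\ge 1$ you leave the case $k\ne l$ entirely open (``I expect to invoke narrowness here''), and \emph{(II)} is likewise only a statement of intent; in the paper this is the content of Cases 3A and 3B of the inductive step, where the fewest-unlabeled-neighbors rule of Line~\ref{alg:beginSecondLoop} produces a vertex $s_2$ that is then eliminated using narrowness, a chordless $5$-cycle, and two further claws. As it stands, the proposal establishes the easy outer shell of the theorem but not its substance.
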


\begin{proof}
By Proposition~\ref{nbdinterval}, it suffices to show that the
labeling produced by Algorithm~\ref{alg:Labeling} has the property
that for all $m \in V(G) = [n]$, 
\begin{equation}
\label{toprove}
\Linkp(m) \text{ is complete and }
\Linkp(m) = [i+m,i+r_m] \text{ for } r_m = |\Linkp(m)|.
\end{equation}  
We will prove this by induction on $m$.  In \eqref{basecase} below, we
show that \eqref{toprove} holds for $m = 1$, and in \eqref{indstep}
below, we show that if \eqref{toprove} holds for all $1\leq u<m$, then
it also holds for $m$.  Thus, we will be done after proving
\eqref{basecase} and \eqref{indstep}.
 \end{proof}

\subsection{The Base Case} After Algorithm \ref{alg:Labeling} runs on
a chordal, claw-free and narrow graph $G$, the 
base case of the induction in the proof of Theorem~\ref{converse} is
the following assertion:
\begin{equation}
\label{basecase}
\Linkp(1)=[2,1+r],\ r = |\Linkp(1)|,\ \text{and } \Linkp(1) \text{ is
  complete} .
\end{equation}
We will first show that $\Linkp(1)=[2,1+r]$, $r = |\Linkp(1)|$.  The
first time through the the loop beginning on Line~\ref{alg:firstLoop}
in Algorithm~\ref{alg:Labeling}, $j=1$ and $i=2$ and $S=\Link(1)$.
For each vertex in $S$, the loop beginning on
Line~\ref{alg:secondLoop} labels that vertex $i$, removes it from $S$,
and increments $i$.  This continues until $S=\emptyset$, at which
point every vertex in $S$ has been labeled $2,3,\ldots,1+r$, where $r$
is the initial size of $S$.  Hence $\Linkp(1)=\Link(1) = [2,1+r]$.

To prove that $\Linkp(1)$ is complete, there are several cases to
consider.  Pick distinct vertices $s,t\in \Linkp(1)$ and assume that
$\{s,t\}\notin E(G)$.  Note that $s,t\in L_1$ are distance $2$ apart
and therefore $h=\Diam(G)\geq2$.  Our choice of vertex $1$ guarantees
that there is a longest shortest path $P$ with 1 as an endpoint.  Let
$z\in V(G)$ be the other, so that $P=v_0v_1\cdots v_h$, $1 = v_0$ and
$v_h = z$.  Since $v_1 \in V(P)$ is the only vertex of $P$ in $L_1$,
$s$ and $t$ cannot both lie on $P$.

Therefore, either $s\in V(P)$, $t\in V(P)$, or
$s,t\notin V(P)$. We will show that each possibility leads to a
contradiction, proving that $\{s,t\}\in E(G)$.

\vskip 0pt plus 1 pt minus 1 pt

{\sf\bfseries Case 1.} Both $s,t\notin V(P)$.  If $s$ has distance
$h-1$ from $z$, then appending the edge $\{1,s\}$ to a shortest path
from $s$ to $z$ gives a longest shortest path $P'$ from $1$ to $z$
that contains $s$.  Replacing $P$ with $P'$, we get $s \in V(P)$,
which is Case 2 to be considered below.  Similarly, if $t$ has
distance $h-1$ from $z$, then replacing $P$ allows us to assume $t \in
V(P)$, which is also covered by Case 2 below.

Thus we may assume that neither $s$ nor $t$ has distance $h-1$ from
$z$.  Since $d(s,z) < h-1$ would imply $d(1,z) < h$, we conclude that
$s$ has distance $h$ from $z$, and the same holds for $t$.  It follows
that $\{s,v_2\},\{t,v_2\} \notin E(G)$, since otherwise there is a
path shorter than length $h$ from $s$ or $t$ to $z$.

Since the subgraph induced on vertices $1,s,t,v_1$ cannot be a claw,
either $\{v_1,s\}\in E(G)$ or $\{v_1,t\}\in E(G)$ or both. We consider
each possibility separately.

\vskip 0pt plus 1 pt minus 1 pt

{\sf\bfseries Case 1A.} Both $\{v_1,s\}$, $\{v_1,t\}\in E(G)$, as
shown in Figure~\fignum{1}(a) on the next page.  Then the subgraph
induced on $v_2,v_1,s,t$ is a claw, contradicting our assumption of
claw-free.

\begin{figure}[t]
%\label{fig:baseCase0}
% refer to this using \fignum{1}
\vskip-10pt
\[
\begin{array}{ccc}
& \begin{tikzpicture}
  \node[vertex] (n1)  at (2,1) {$1\rule[-2.5pt]{0pt}{10pt}$};
  \node[vertex] (s)  at (3.5,3)  {$s\rule[-2.5pt]{0pt}{10pt}$};
  \node[vertex] (t) at (.5,3) {$t\rule[-2.5pt]{0pt}{10pt}$};
  \node[vertex] (v1) at (2,3) {$v_1\rule[-2.5pt]{0pt}{10pt}$}; 
  \node[vertex] (v2) at (2,5) {$v_2\rule[-2.5pt]{0pt}{10pt}$};

  \foreach \from/\to in {n1/v1,n1/t,n1/s,v1/v2,s/v1,t/v1}
 \draw (\from)--(\to);

  \node [right, font = \large] at (4.2,3) {$L_1$};
  \node [right, font=\large] at (4.2,5) {$L_2$};
  \node [right, font = \large] at (4.2,1) {$L_0$};

\node [right] at (2.5,.3) {(a)};

\end{tikzpicture} &\hspace{25pt}\begin{tikzpicture}
  \node[vertex] (n1)  at (2,1) {$1\rule[-2.5pt]{0pt}{10pt}$};
  \node[vertex] (s)  at (3.5,3.2)  {$s\rule[-2.5pt]{0pt}{10pt}$};
  \node[vertex] (t) at (.5,3.2) {$t\rule[-2.5pt]{0pt}{10pt}$};
  \node[vertex] (v1) at (2,3.2) {$v_1\rule[-2.5pt]{0pt}{10pt}$}; 
  \node[vertex] (v2) at (2,5.4) {$v_2\rule[-2.5pt]{0pt}{10pt}$};
  \node[vertex] (t2) at (.5,5.4) {$t_2\rule[-2.5pt]{0pt}{10pt}$};

  \foreach \from/\to in {n1/s,n1/t,n1/v1, v1/v2, s/v1,t/t2}
 \draw (\from)--(\to);

  \node [right, font = \large] at (4.3,3.2) {$L_1$};
  \node [right, font=\large] at (4.3,5.4) {$L_2$};
  \node [right, font = \large] at (4.3,1) {$L_0$};

\node [right] at (2.5,.3) {(b)};

\end{tikzpicture} 
\end{array}
\]
\vskip-8pt
\caption{The portion of the graph relevant to (a) Case 1A and (b) Case
  1B.}
\end{figure}
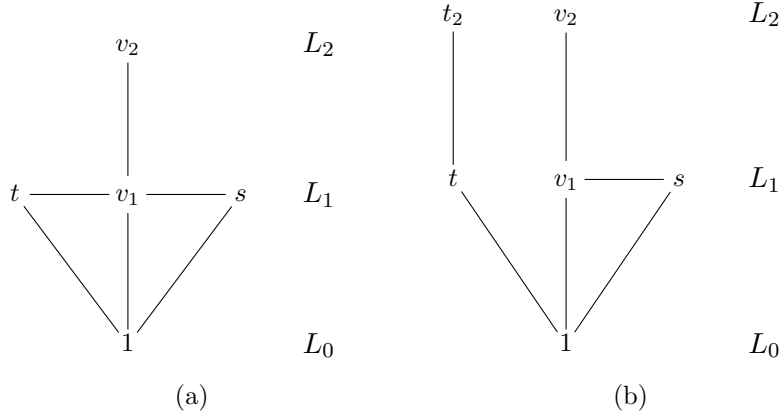

\vskip 0pt plus 1 pt minus 1 pt

{\sf\bfseries Case 1B.} Exactly one of $\{s,v_1\}, \{t,v_1\}$ is in
$E(G)$.  Without loss of generality, we may assume $\{s,v_1\}\in E(G)$
and $\{t,v_1\}\notin E(G)$, as shown in Figure~\fignum{1}(b).  Recall
that $\{t,v_2\}\notin E(G)$ and $t$ is distance $h$ to $z$.

Since $t$ and $1$ are both endpoints of longest shortest paths,
Line~\ref{alg:possible1} of Algorithm~\ref{alg:Labeling} implies that
$\deg(1) \le \deg(t)$.  Since $v_1$ is adjacent to $1$ but not $t$,
there must be at least one $t_2$ adjacent to $t$ but not $1$, i.e.,
$t_2\in \Link(t)$ with $t_2\notin \Link(1)$.

For this $t_2$, it follows that $t_2\in L_2$.  We also have $t_2\neq
v_2$ since $\{t,v_2\}\notin E(G)$.  Furthermore, $\{t_2,s\}\notin
E(G)$, since otherwise we would have the $4$-cycle $t_2s1tt_2$ with no
chords as $\{t_2,1\}$, $\{t,s\}\notin E(G)$.  Similarly,
$\{t_2,v_1\}\notin E(G)$ or else we would have the $4$-cycle
$t_2v_1\hskip-.9pt 1tt_2$ with no chords since $\{t_2,1\}$,
$\{t,v_1\}\notin E(G)$.  Note also that $\{t_2,v_2\}\notin E(G)$,
since otherwise we would have the $5$-cycle $t_2v_2v_1\hskip-.9pt
1tt_2$ with no chords as $\{1,v_2\}$, $\{1,t_2\}$, $\{t,v_2\}$,
$\{t,v_1\}$, $\{t_2,v_1\}\notin E(G)$, contradicting chordal.  Hence
$t_2$ gives Figure~\fignum{1}(b) as an induced subgraph.

Since $G$ is narrow, either $t_2\in V(P)$ or $t_2$ is adjacent to a
vertex of $P$.  However, $t_2\in V(P)$ would imply $t_2=v_2$ since
both lie in $L_2$, contradicting $t_2\neq v_2$.  Thus $\{t_2,v_u\}\in
E(G)$ for some $u>1$.  Since $t_2 \in L_2$ and $v_u \in L_u$, we have
$u \le 3$ by Lemma~\ref{layerlem}(1).  We just proved
$\{t_2,v_2\}\notin E(G)$, so we must have $\{t_2,v_3\} \in E(G)$.
This gives the $6$-cycle $t_2v_3v_2v_1\hskip-.9pt 1tt_2$.  Since
Figure~\fignum{1}(b) is an induced subgraph, the only possible chords
are $\{1,v_3\}$, $\{t,v_3\}$, $\{v_1,v_3\}$, but by
Lemma~\ref{layerlem}(1) none of these are in $E(G)$ since $v_3 \in
L_3$ and $1,t,v_1 \in L_0\cup L_1$. Hence the $6$-cycle has no chords,
contradicting chordal.

{\sf\bfseries Case 2.} $s \in V(P)$ or $t \in V(P)$.  We may assume
$s=v_1$.  Arguing as in Case 1B, there is $t_2\in \Link(t)$ with
$t_2\notin \Link(1)$ and $t_2\in L_2$.  We also have $\{t,v_2\}\notin
E(G)$, since otherwise the $4$-cycle $1sv_2t1$ has no chords as
$\{t,s\}$, $\{1,v_2\}\notin E(G)$.

Since $G$ is narrow, $t_2$ must either be in $P$ or be adjacent to a
vertex in $P$.  However, $t_2 \in V(P)$ would imply $t_2 = v_2$ since
$t_2,v_2 \in L_2$, and the latter would give $\{t,v_2\} = \{t,t_2\}
\in E(G)$, which we just showed to be impossible.  Hence $t_2\notin
V(P)$, so that $\{t_2,v_u\}\in E(G)$ for some $u$.  Note that $u < 4$
by Lemma~\ref{layerlem}(1).  We claim that $u = 3$.

To see why, first note that $\{t_2,s=v_1\}\notin E(G)$, since
otherwise we would have the $4$-cycle $1tt_2v_1\hskip-.9pt 1$ with no
chords as $\{t,s\}$, $\{t_2,1\}\notin E(G)$. We also know that
$\{t_2,v_2\}\notin E(G)$, as otherwise we would have the $5$-cycle
$t_2v_2s1tt_2$ with no chords since $\{t_2,s\}$, $\{t_2,1\}$,
$\{s,t\}$, $\{t,v_2\}$, $\{v_2,1\}\notin E(G)$.  See
Figure~\fignum{2}(a).

Thus we must have $\{t_2,v_3\}\in E(G)$. However, this gives a
$6$-cycle $t_2v_3v_2s1tt_2$ with the same impossible chords as before
along with $\{t,v_3\}$, $\{1,v_3\}$, $\{s,v_3\}$, $\{v_2,t_2\}\notin
E(G)$, as in Figure~\fignum{2}(b).  This contradicts chordal, and
\eqref{basecase} follows.

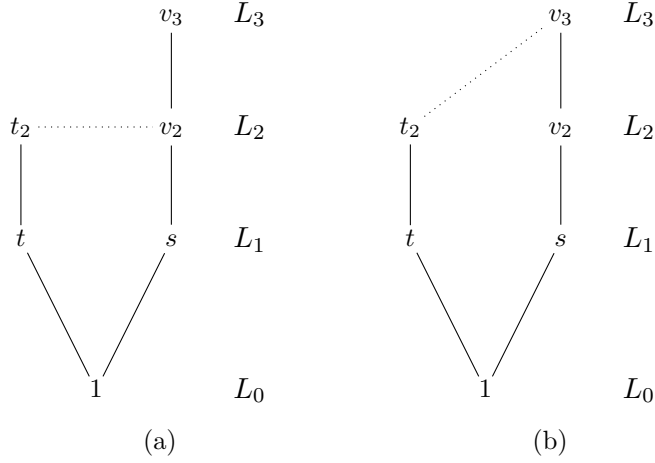
\begin{figure}[t]
%\label{fig:baseCaseCycles}
% refer to via \fignum{2}
\[
\begin{array}{ccc}
\begin{tikzpicture}
  \node[vertex] (n1)  at (2,1) {$1\rule[-2.5pt]{0pt}{10pt}$};
  \node[vertex] (n2)  at (1,3)  {$t\rule[-2.5pt]{0pt}{10pt}$};
  \node[vertex] (n3) at (3,3) {$s\rule[-2.5pt]{0pt}{10pt}$};
 
  \node[vertex] (n4) at (1,4.5){$t_2\rule[-2.5pt]{0pt}{10pt}$};
  \node[vertex] (n5) at (3,4.5){$v_2\rule[-2.5pt]{0pt}{10pt}$};
  \node[vertex] (n6) at (3,6){$v_3\rule[-2.5pt]{0pt}{10pt}$};

  \foreach \from/\to in {n1/n2,n1/n3,n2/n4, n3/n5, n6/n5}
\draw (\from)--(\to);;
 \foreach \from/\to in {n4/n5}
\draw[dotted] (\from)--(\to);

  \node [right, font = \large] at (3.7,3) {$L_1$};
  \node [right, font=\large] at (3.7,4.5) {$L_2$};
  \node [right, font = \large] at (3.7,1) {$L_0$};
  \node [right, font = \large] at (3.7,6) {$L_3$};

  \node [right] at (2.5,.3) {(a)};

\end{tikzpicture}&\hspace{25pt}&
\begin{tikzpicture}
  \node[vertex] (n1)  at (2,1) {$1\rule[-2.5pt]{0pt}{10pt}$};
  \node[vertex] (n2)  at (1,3)  {$t\rule[-2.5pt]{0pt}{10pt}$};
  \node[vertex] (n3) at (3,3) {$s\rule[-2.5pt]{0pt}{10pt}$};
 
  \node[vertex] (n4) at (1,4.5){$t_2\rule[-2.5pt]{0pt}{10pt}$};
  \node[vertex] (n5) at (3,4.5){$v_2\rule[-2.5pt]{0pt}{10pt}$};
  \node[vertex] (n6) at (3,6){$v_3\rule[-2.5pt]{0pt}{10pt}$};

  \foreach \from/\to in {n1/n2,n1/n3,n2/n4, n3/n5, n6/n5}
\draw (\from)--(\to);;
 \foreach \from/\to in {n4/n6}
\draw[dotted] (\from)--(\to);

  \node [right, font = \large] at (3.7,3) {$L_1$};
  \node [right, font=\large] at (3.7,4.5) {$L_2$};
  \node [right, font = \large] at (3.7,1) {$L_0$};
  \node [right, font = \large] at (3.7,6) {$L_3$};

  \node [right] at (2.5,.3) {(b)};

\end{tikzpicture}
\end{array}
\]
\vskip-8pt
\caption{Both (a) and (b) cannot have the dotted edge or the graph has
  a $5$-cycle or $6$-cycle with no chord.} 
\end{figure}

\subsection{The Inductive Step} After Algorithm \ref{alg:Labeling}
runs on a chordal, claw-free and narrow graph $G$, we now prove that
the resulting labeling satsifies the inductive step in the proof of
Theorem~\ref{converse}:
\begin{equation}
\label{indstep}
\begin{aligned}
&\text{If $\Linkp(u)=[u+1,u+r_u]$, $r_u = |\Linkp(u)|$, $\Linkp(u)$ is
    complete, $1\leq u<m$,} \\ &\text{then $\Linkp(m) = [m+1,m+r_m]$,
    $r_m = |\Linkp(m)|$, and $\Linkp(m)$ is complete.}
\end{aligned}
\end{equation}

For the first assertion of \eqref{indstep}, we know that
$\Linkp(m-1)=[m,m-1+r_{m-1}]$ is complete, which implies that
$m+1,\ldots, m-1+r_{m-1}\in \Linkp(m)$.  By analyzing the loop
beginning on Line~\ref{alg:secondLoop} at this stage of
Algorithm~\ref{alg:Labeling}, one finds that every
vertex in $S$ will be labeled with consecutive integers, starting at
$i=m+r_{m-1}$ and continuing until the final vertex in $\Link(m)$ is
labeled $i=m+r_{m-1}+r-1$, where $r$ is the original size of $S$.  It
follows that $\Link(m)$ is an interval of the desired form.

To show that $\Linkp(m)$ is complete, pick $s \ne t$ in $\Linkp(m)$. Let $P= v_0v_1\cdots v_{q-1}v_q$ be
a shortest path from $1 = v_0$ to $v_q = m$, with $v_u \in L_u$ for
all $u$.  Lemmas~\ref{layerlem}(1) and~\ref{claim:orderingClaim}(2) imply that $s,t \in L_q \cup L_{q+1}$.  Hence, $s$
and $t$ are either both distance $q+1$ from 1, both distance $q$ from
1, or one of $s$ and $t$ is distance $q$ from 1 and the other is
distance $q+1$ from 1.  We consider each case separately.

\vskip0pt plus3pt minus2pt

{\sf\bfseries Case 1.} $s,t\in L_{q+1}$.  Then $\{s,v_{q-1}\}$,
$\{t,v_{q-1}\}\notin E(G)$ by Lemma~\ref{layerlem}(1).
Since the subgraph induced on $s,t,m,v_{q-1}$ cannot be a claw, we
must have $\{s,t\} \in E(G)$.

\vskip0pt plus3pt minus2pt

{\sf\bfseries Case 2.} $s,t\in L_q$.  We can assume $s<t$ and choose a
shortest path $P_1=w_0w_1\cdots w_q$ from $1=w_0$ to $w_q = t$ with
$w_u\in L_u$.  Then $w_{q-1}<m$ by Lemma~\ref{claim:orderingClaim}(2),
giving $w_{q-1}<m<s<t$.  Since $t\in \Linkp(w_{q-1})$ and
$\Linkp(w_{q-1})$ is an interval by hypothesis, we have $s\in
\Linkp(w_{q-1})$.  But then $\{s,t\}\in E(G)$ since we are also
assuming that $\Linkp(w_{q-1})$ is complete.

{\vskip0pt plus3pt minus2pt

\sf\bfseries Case 3.} We can assume $s\in L_q$
and $t\in L_{q+1}$, so $s<t$ by
Lemma~\ref{claim:orderingClaim}(2).  We also have $l(m)\leq l(s)$ by
Lemma~\ref{claim:meaningOfFunction}(2) since $m<s$.  We will consider
separately the two possibilities that $l(m)<l(s)$ and $l(m)=l(s)$.

\vskip0pt plus3pt minus2pt

{\sf\bfseries Case 3A.} Suppose that $l(m) < l(s)$.  Then
$\{l(m),s\}\notin E(G)$ since $l(s)=\min(\Link(s))$ by
Lemma~\ref{claim:meaningOfFunction}(1).  We also have $\{l(m),t\}\notin
E(G)$, for otherwise we would have $l(t)\leq l(m)$ since
$l(t)=\min(\Link(t))$.  Then $l(t) \le l(m) < l(s)$, which implies
$t<s$ by Lemma~\ref{claim:meaningOfFunction}(2), contradicting $s<t$.
Since the subgraph induced on $l(m),m,s,t$ cannot be a claw, we must
have $\{s,t\} \in E(G)$.

\vskip0pt plus3pt minus2pt

{\sf\bfseries Case 3B.}  Suppose that $l(m)=l(s)$. We will assume
$\{s,t\} \notin E(G)$ and derive a contradiction.  The equality
$l(m)=l(s)$ means that $m$ and $s$ were both labeled when
$j=l(m)=l(s)$ in the loop starting on Line~\ref{alg:firstLoop} of
Algorithm~\ref{alg:Labeling}.  Consider the moment in the algorithm
when the label $m$ is assigned.  Since $m < s$ and $j=l(m)=l(s)$, this
happens during an iteration of the loop on Line~\ref{alg:secondLoop}
for which $m,s\in S$.  Line~\ref{alg:beginSecondLoop} guarantees that
the vertices assigned the labels $m$ and $s$ satisfy
$|\Link(m)\setminus J| \leq |\Link(s)\setminus J|$.  Since $s$ is not
yet labeled at this point and $s<t$, $t$ is also not yet labeled and
therefore $t\notin J$.  It follows that $t\in \Link(m)\setminus J$ and
$t\notin \Link(s)\setminus J$.  But, in order for $|\Link(m)\setminus
J| \leq |\Link(s)\setminus J|$ to hold, there must be $s_2\in
\Link(s)$ with $s_2>m$ and $s_2\notin J$ and $s_2\notin \Link(m)$.

Let us study $s_2$.  If $\{s_2,l(m)\}\in E(G)$, then $s_2\in
\Linkp(l(m))$.  But we also have $m \in \Linkp(l(m))$.  Since
$l(m)<m$, $\Linkp(l(m))$ is complete by the hypothesis of \eqref{indstep},
so we would have $\{m,s_2\}\in E(G)$.  This contradicts our choice of
$s_2$.  Hence $\{s_2,l(m)\}\notin E(G)$.  We also have
$\{s_2,t\}\notin E(G)$, since otherwise the $4$-cycle $s_2tmss_2$
would have no chords as $\{s_2,m\},\{s,t\}\notin E(G)$.  Also, since
$m\in L_q$, Lemma~\ref{claim:orderingClaim}(1) implies that
$j=l(m)=l(s)\in L_{q-1}$.  

We claim that $s_2 \in L_{q+1}$.  Lemma~\ref{layerlem}(1), $s \in
L_q$, and $s_2 > m \in L_q$ imply that $s_2 \in L_q$ or $L_{q+1}$.  If
$s_2 \in L_q$, then $l(s_2) \in L_{q-1}$ by
Lemma~\ref{claim:orderingClaim}(2).  From here, $m \in L_q$ implies $m
> l(s_2)$ by Lemma~\ref{claim:orderingClaim}(2).  Hence we have
$l(s_2) < m < s_2$.  The hypothesis of \eqref{indstep} implies that
$\Linkp(l(s_2))$ is complete and is an interval.  Since $s_2 \in
\Linkp(l(s_2))$, it follows that $m \in \Linkp(l(s_2))$, which
contradicts our choice of $s_2$.  Hence $s_2 \in L_{q+1}$ and we have
Figure~\fignum{3}(a).  

%Observe that $h>2$, since $h = 2$ would make $jmt$ a longest shortest path in Figure~\fignum{3}(a), so $s_2$ would be adjacent to $j$, $m$, or $t$ since $G$ narrow.  We already showed that none of these can occur.

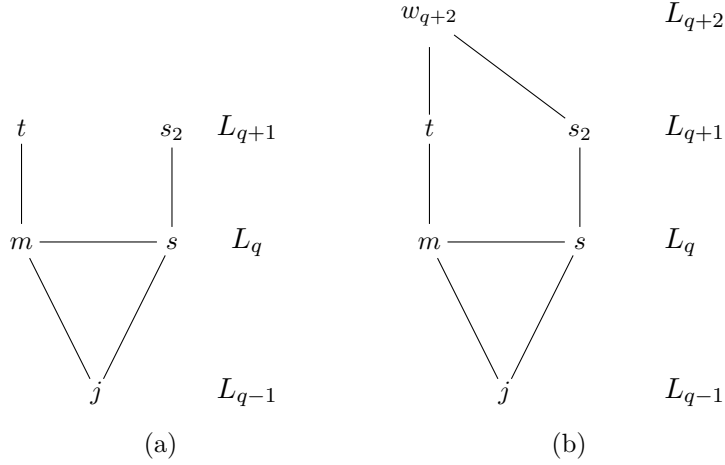
\begin{figure}[t]
%\label{figure:inductiveStep1}
% refer using \fignum{3}
\[
\begin{array}{ccc}
\begin{tikzpicture}
  \node[vertex] (n1)  at (2,1) {$j\rule[-2.5pt]{0pt}{10pt}$};
  \node[vertex] (n2)  at (1,3)  {$m\rule[-2.5pt]{0pt}{10pt}$};
  \node[vertex] (n3) at (3,3) {$s\rule[-2.5pt]{0pt}{10pt}$};
 
  \node[vertex] (n4) at (1,4.5){$t\rule[-2.5pt]{0pt}{10pt}$};
  \node[vertex] (n5) at (3,4.5){$s_2\rule[-2.5pt]{0pt}{10pt}$};

  \foreach \from/\to in {n1/n2,n1/n3,n2/n3,n2/n4, n3/n5}
\draw (\from)--(\to);;

  \node [vertex, font = \large] at (4,3) {$L_q$};
  \node [vertex, font=\large] at (4,4.5) {$L_{q+1}$};
  \node [vertex, font = \large] at (4,1) {$L_{q-1}$};

  \node [right] at (2.5,.3) {(a)};

\end{tikzpicture}&\hspace{25pt}&
\begin{tikzpicture}
  \node[vertex] (n1)  at (2,1) {$j\rule[-2.5pt]{0pt}{10pt}$};
  \node[vertex] (n2)  at (1,3)  {$m\rule[-2.5pt]{0pt}{10pt}$};
  \node[vertex] (n3) at (3,3) {$s\rule[-2.5pt]{0pt}{10pt}$};
 
  \node[vertex] (n4) at (1,4.5){$t\rule[-2.5pt]{0pt}{10pt}$};
  \node[vertex] (n5) at (3,4.5){$s_2\rule[-2.5pt]{0pt}{10pt}$};
  \node[vertex] (n6) at (1,6){$w_{q+2}$};

  \foreach \from/\to in {n1/n2,n1/n3,n2/n3,n2/n4, n3/n5, n4/n6, n6/n5}
\draw (\from)--(\to);;

  \node [right, font = \large] at (4,3) {$L_q$};
  \node [right, font=\large] at (4,4.5) {$L_{q+1}$};
  \node [right, font = \large] at (4,1) {$L_{q-1}$};
  \node [right, font = \large] at (4,6) {$L_{q+2}$};

  \node [right] at (2.5,.3) {(b)};

\end{tikzpicture}
\end{array}
\]
\vskip-10pt
\caption{(a) Induced subgraph on $j=l(m)=l(s)$, $m$, $s$, $t$, $s_2$
  and (b) $5$-cycle with no chords.} 
\end{figure}

Let $z$ be a vertex of distance $h=\Diam(G)$ from $1$ and pick a
longest shortest path $P_2=w_0w_1\cdots w_h$ from $1=w_0$ to $w_h =z$,
so $w_u\in L_u$.  Since $G$ is narrow, $t$ and $s_2$ must each either
be in $P_2$ or be adjacent to a vertex in $P_2$.  We will consider
each of these cases. 

\emph{First}, suppose that $t\in V(P_2)$.  Then $t\in L_{q+1}$ implies
that $t=w_{q+1}$.  Since $l(m) \in L_{q-1}$, there is a path of length
$q-1$ connecting $1$ to $l(m)$.  Using $t = w_{q+1}$, it follows that
$P_3=1\cdots l(m) m t w_{q+2}\cdots z$ is a path of length
$h=\Diam(G)$.  Since $G$ is narrow, $s_2$ must be adjacent to some
vertex $P_3$.  Then $\{s_2,t\}$, $\{s_2,m\},\notin E(G)$ and
Lemma~\ref{layerlem}(1) imply that $\{s_2,w_{q+2}\} \in E(G)$.  This
gives the $5$-cycle $mss_2w_{q+2}tm$ with no chords since $\{s_2,t\}$,
$\{m,s_2\}$, $\{s,t\}\notin E(G)$ and $\{w_{q+2},m\}$,
$\{w_{q+2},s\}\notin E(G)$ since $w_{q+2}\in L_{q+2}$ but $s,m\in
L_q$.  See Figure~\fignum{3}(b).  Hence we have a contradiction since
$G$ is chordal.

\emph{Second}, suppose that $s_2\in V(P_2)$.  Then $s_2=w_{q+1}$.
Arguing as in the \emph{First}, we arrive at Figure~\fignum{3}(b) with
the same $5$-cycle with no chords, again a contradiction.

\emph{Third}, suppose that $s_2,t\notin V(P_2)$.  First note that $P_2$
was an arbitrary longest shortest path starting at $1$.  Thus the
above \emph{First} and \emph{Second} give a contradiction whenever
$s_2$ or $t$ are on \emph{any} longest shortest path starting at $1$.
Hence we may assume that $s_2$ and $t$ are not on any shortest path of
length $h$ starting at $1$.  

Since $G$ is narrow, $s_2 \in L_{q+1}$ is adjacent to a vertex of
$P_2$, which must be $w_q$, $w_{q+1}$, or $w_{q+2}$ by
Lemma~\ref{layerlem}(1).  However, if $\{s_2,w_{q+2}\} \in E(G)$, then
we would get a path of length $h$ from $1$ to $z$ by taking any shortest
path from 1 to $s_2$, followed by $\{s_2,w_{q+2}\}$, and then
continuing along  $P_2$ from $w_{q+2}$ to $z$.  This longest shortest
path starts at $1$ and contains $s_2$, contradicting the previous
paragraph.  Hence  $\{s_2,w_{q+2}\} \notin E(G)$ and
$s_2$ must be adjacent to $w_q$ or $w_{q+1}$, and 
the same is true for $t$ by a similar argument.

In fact, we must have $\{s_2,w_q\} \in E(G)$, since otherwise
$\{s_2,w_{q+1}\} \in E(G)$ and the subgraph induced on
$w_q,w_{q+1},w_{q+2},s_2$ would be a claw.  A similar argument shows
that $\{t,w_q\} \in E(G)$.  Since $w_{q-1}\in L_{q-1}$ and $s_2,t\in
L_{q+1}$, this implies that the subgraph induced on
$t,s_2,w_q,w_{q-1}$ is a claw, again contradicting claw-free.  This
final contradiction completes the proof of \eqref{indstep}, and
Theorem~\ref{converse} is proved.

\begin{remark}
\label{remark:Ching}
In \eqref{basecase} and \eqref{indstep}, the chordal hypothesis is
applied only to cycles of length $4$, $5$, or $6$.  Hence, in
Theorem~\ref{mainthm} and Corollary~\ref{cormainthm}, we can replace
chordal with the weaker hypothesis that all cycles of length $4$, $5$,
or $6$ have a chord.
\end{remark}

\section*{Acknowledgements}

Theorem~\ref{mainthm} is based on the senior honors thesis of the
second author, written under the direction of the first author.  We
are grateful to Amherst College for the Post-Baccalaureate Summer
Research Fellowship that supported the writing of this paper.  Thanks
also to Michael Ching for Remark~\ref{remark:Ching}.

\end{document}